\numberwithin{equation}{section}
\begin{document}

\newtheorem{theorem}{Theorem}[section]
\newtheorem{lemma}[theorem]{Lemma}
\newtheorem{proposition}[theorem]{Proposition}
\newtheorem{corollary}[theorem]{Corollary}
\newtheorem{conjecture}{Conjecture}

\theoremstyle{definition}
\newtheorem{definition}[theorem]{Definition}
\newtheorem{example}[theorem]{Example}

\theoremstyle{remark}
\newtheorem{remark}[theorem]{Remark}
\newtheorem*{ack}{Acknowledgments}

\newenvironment{magarray}[1]
{\renewcommand\arraystretch{#1}}
{\renewcommand\arraystretch{1}}

\newcommand{\mapor}[1]{\smash{\mathop{\longrightarrow}\limits^{#1}}}
\newcommand{\mapin}[1]{\smash{\mathop{\hookrightarrow}\limits^{#1}}}
\newcommand{\mapver}[1]{\Big\downarrow
\rlap{$\vcenter{\hbox{$\scriptstyle#1$}}$}}
\newcommand{\liminv}{\smash{\mathop{\lim}\limits_{\leftarrow}\,}}

\newcommand{\Set}{\mathbf{Set}}
\newcommand{\Art}{\mathbf{Art}}
\newcommand{\solose}{\Rightarrow}

\newcommand{\specif}[2]{\left\{#1\,\left|\, #2\right. \,\right\}}

\renewcommand{\bar}{\overline}
\newcommand{\de}{\partial}
\newcommand{\debar}{{\overline{\partial}}}
\newcommand{\per}{\!\cdot\!}
\newcommand{\Oh}{\mathcal{O}}
\newcommand{\sA}{\mathcal{A}}
\newcommand{\sB}{\mathcal{B}}
\newcommand{\sC}{\mathcal{C}}
\newcommand{\sD}{\mathcal{D}}
\newcommand{\sE}{\mathcal{E}}
\newcommand{\sF}{\mathcal{F}}\newcommand{\sG}{\mathcal{G}}
\newcommand{\sH}{\mathcal{H}}
\newcommand{\sI}{\mathcal{I}}
\newcommand{\sJ}{\mathcal{J}}
\newcommand{\sL}{\mathcal{L}}
\newcommand{\sM}{\mathcal{M}}
\newcommand{\sP}{\mathcal{P}}
\newcommand{\sU}{\mathcal{U}}
\newcommand{\sV}{\mathcal{V}}
\newcommand{\sX}{\mathcal{X}}
\newcommand{\sY}{\mathcal{Y}}
\newcommand{\sN}{\mathcal{N}}
\newcommand{\sZ}{\mathcal{Z}}

\newcommand{\Aut}{\operatorname{Aut}}
\newcommand{\Mor}{\operatorname{Mor}}
\newcommand{\Def}{\operatorname{Def}}
\newcommand{\Hom}{\operatorname{Hom}}
\newcommand{\Hilb}{\operatorname{Hilb}}
\newcommand{\HOM}{\operatorname{\mathcal H}\!\!om}
\newcommand{\DER}{\operatorname{\mathcal D}\!er}
\newcommand{\Spec}{\operatorname{Spec}}
\newcommand{\Der}{\operatorname{Der}}
\newcommand{\End}{{\operatorname{End}}}
\newcommand{\END}{\operatorname{\mathcal E}\!\!nd}
\newcommand{\Image}{\operatorname{Im}}
\newcommand{\coker}{\operatorname{coker}}
\newcommand{\tot}{\operatorname{tot}}
\newcommand{\Diff}{\operatorname{Diff}}
\newcommand{\ten}{\otimes}
\newcommand{\mA}{\mathfrak{m}_{A}}

\renewcommand{\Hat}[1]{\widehat{#1}}
\newcommand{\dual}{^{\vee}}
\newcommand{\desude}[2]{\dfrac{\de #1}{\de #2}}
\newcommand{\sK}{\mathcal{K}}
\newcommand{\A}{\mathbb{A}}
\newcommand{\N}{\mathbb{N}}
\newcommand{\R}{\mathbb{R}}
\newcommand{\Z}{\mathbb{Z}}
\renewcommand{\H}{\mathbb{H}}
\renewcommand{\L}{\mathbb{L}}
\newcommand{\proj}{\mathbb{P}}
\newcommand{\K}{\mathbb{K}\,}
\newcommand\C{\mathbb{C}}
\newcommand\T{\mathbb{T}}
\newcommand\Del{\operatorname{Del}}
\newcommand\Tot{\operatorname{Tot}}
\newcommand\Grpd{\mbox{\bf Grpd}}
\newcommand\rif{~\ref}

\newcommand\vr{``}
\newcommand{\rh}{\rightarrow}
\newcommand{\contr}{{\mspace{1mu}\lrcorner\mspace{1.5mu}}}

\newcommand{\bi}{\boldsymbol{i}}
\newcommand{\bl}{\boldsymbol{l}}
\newcommand{\op}{\operatorname}
\newcommand{\MC}{\operatorname{MC}}
\newcommand{\Coder}{\operatorname{Coder}}
\newcommand{\TW}{\operatorname{TW}}
\newcommand{\id}{\operatorname{id}}
\newcommand{\ad}{\operatorname{ad}}
\newcommand{\cone}{\operatorname{C}}
\newcommand{\cylinder}{\operatorname{Cyl}}
\newcommand{\tr}{\triangleright}
\newcommand{\tl}{\triangleleft}

\title{A trick to compute certain Manin products of operads}
\author{Ruggero Bandiera}
\begin{abstract} We describe simple tricks to compute the Manin black products with the operads $\mathcal{A}ss$, $\mathcal{C}om$ and $pre\mathcal{L}ie$. \end{abstract}

\maketitle
\section*{Introduction}

The aim of this paper is to describe a simple method to compute the Manin black products with the operads $\sA ss$, $\sC om$ and $pre\sL ie$. For instance, this allows the computation of $\sA ss\bullet\sC om$ and $pre\sL ie\bullet pre\sL ie$ (Examples \ref{ex:ass} and \ref{ex:prelie}), answering some questions posed by Loday \cite{L}  (these had beeen answered already \cite{bai2,GK}). While simple methods to compute $pre\sL ie\bullet-$ were already known \cite{bai,bai3}, to our knowledge the results are new for the functors $\sA ss\bullet-$, $\sC om\bullet-$. We illustrate the method by several explicit computations: among these, we show that in the square diagram of operads introduced by Chapton \cite{cha}   
\[\xymatrix{ pre\sL ie\ar[r]&\sD end\ar[r]& \sZ inb\\ \sL ie\ar[r]\ar[u]&\sA ss\ar[r]\ar[u]&\sC om\ar[u]\\\sL eib\ar[r]\ar[u]&di\sA ss\ar[r]\ar[u]&\sP erm\ar[u] }\]
where it is well known that the top row is the Manin black product of the middle one with $pre\sL ie$ and the bottom row is the Manin white product of the middle one with $\sP erm$, it is also true that the top row is the Manin \emph{white} product of the middle one with $\sZ inb$ and the bottom row is the Manin \emph{black} product of the middle one with $\sL eib$.

We explain the method by considering the case of $(\sA ss,\cup)$, where we denote by $\cup$ the generating associative product. It is convenient to consider first the (Koszul) dual computation of the Manin white
product $\sA ss\circ-$. Roughly, given an operad $\Oh$, which will be always an operad in vector spaces over a field $\mathbb{K}$ and moreover binary, quadratic and finitely generated by non-symmetric operations $\cdot_i$, $i=1,\ldots,p$, symmetric operations $\bullet_j$, $j=1,\ldots,q$, and anti-symmetric operations $[-,-]_k$, $k=1,\ldots,r$, the operad $\sA ss\circ\Oh$ is generated by the tensor product operations $\cup\otimes\cdot_i$, $\cup\otimes\cdot_i^{op}$, $\cup\otimes\bullet_j$, $\cup\otimes[-,-]_k$ (all of which are non-symmetric, and where $\cdot_i^{op}$ are the opposite products) together with the relations holding in the tensor product $A\ten V$ of a generic $\sA ss$-algebra $(A,\cup)$ and a generic $\Oh$-algebra $(V,\cdot_i,\bullet_j,\ast_k)$. We define a functor $\op{Ass}_\circ(-):\mathbf{Op}\to\mathbf{Op}$ by replacing the generic $\sA ss$-algebra $(A,\cup)$ in the above definition of $\sA ss\circ-$ with the dg associative algebra $(C^*(\Delta_1;\mathbb{K}),\cup)$ of non-degenerate cochains on the $1$-simplex with the usual cup product, and show that $\op{Ass}_\circ(\Oh)=\sA ss\circ\Oh$, essentially because the only relations satisfied in $(C^*(\Delta_1;\mathbb{K}),\cup)$ are the associativity relations. 

We notice that besides the relation of $\sA ss\circ\Oh$-algebra, the tensor product operations (which we will simply call the cup products in the body of the paper) on $C^*(\Delta_1;V)=C^*(\Delta_1;\mathbb{K})\otimes V$ satisfy the Leibniz identity with respect to the differential: moreover, for any $X\subset\Delta_1$ (that is, the boundary or one of the vertices) the subcomplex of relative cochains $C^*(\Delta_1,X;V)\subset C^*(\Delta_1;V)$ is a dg $\sA ss\circ\Oh$-ideal. We reassume the above properties by saying that $C^*(\Delta_1;V)$ with the tensor product operations (cup products) is a local dg $\sA ss\circ\Oh$-algebra. 

Conversely, our trick to compute $\sA ss\bullet\Oh$ consists in imposing a local dg $\Oh$-algebra structure $(C^*(\Delta_1;V),\cdot_i,\bullet_j,[-,-]_k)$ on the complex $C^*(\Delta_1;V)$. Notice that the space $C^*(\Delta_1;V)$ splits into the direct sum of three copies of $V$, we write $C^*(\Delta_1;V)=V_0\oplus V_1\oplus V_{01}$, where $V_0$ (resp.: $V_1$) is the copy corresponding to the left (resp.: right) vertex and $V_{01}$ is the copy corresponding to the $1$-dimensional cell. The locality assumption and the Leibniz relation with respect to the differential imply that the whole $\Oh$-algebra structure on $C^*(\Delta_1;V)$ is determined by the products $\cdot_i:V_0\otimes V_{01}\to V_{01}$, $\cdot_i:V_{01}\otimes V_0\to V_{01}$, $\bullet_j:V_0\otimes V_{01}\to V_{01}$, $[-,-]_k:V_0\otimes V_{01}\to V_{01}$, that is, by the datum of $2p+q+r$ non-symmetric operations $\prec_i,\succ_i,\circ_j,\ast_k$ on $V$: then the relations of $\Oh$-algebra on $C^*(\Delta_1;V)$ are equivalent to certain relations on the products $\prec_i,\succ_i,\circ_j,\ast_k$, and this defines a functor $\op{Ass}_\bullet(-):\mathbf{Op}\to\mathbf{Op}$. We remark that the computation of this functor is completely mechanical. Finally, this is exactly the same as the functor $\sA ss\bullet-$: this will be proved at the very end of the paper, by showing that the functors $\xymatrix{\op{Ass}_\bullet(-):\mathbf{Op}\ar@<2pt>[r]& \mathbf{Op}:\op{Ass}_\circ(-)\ar@<2pt>[l]}$ form an adjoint pair (notice how the counit is obvious: given an $\Oh$-algebra structure on $V$, there is a local dg $\op{Ass}_\circ(\Oh)$-algebra structure on $C^*(\Delta_1;V)$ via the tensor product operations, and by defninition this is the same as an $\op{Ass}_\bullet(\op{Ass}_\circ(\Oh))$-algebra structure on $V$).

The trick to compute $\sC om\bullet-$ and $pre\sL ie\bullet-$ (and dually $\sL ie\circ-$ and $\sP erm\circ-$) is similar: we replace the dg associative algebra $(C^*(\Delta_1;\mathbb{K}),\cup)$ in the above discussion with the dg Lie algebra $(C^*(\Delta_1;\mathbb{K}),[-,-]=\cup-\cup^{op})$ in the first case, and with the dg (right) permutative algebra $(C^*(\Delta_1,v_l;\mathbb{K}),\cup)$ (where $v_l\subset\Delta_1$ is the left vertex) in the second case.

\begin{ack} The author is grateful to Domenico Fiorenza for some useful discussions.
\end{ack}

\bigskip

\subsection*{Preliminary remarks.} The author is not an actual expert on operads (and is afraid this might be painfully evident throughout the reading to anyone who is), accordingly, we will think of operads n\"aively, as defined by the corresponding type of algebras. Moreover, although the constructions seem to make sense in more general settings (for instance, dg operads), we limit ourselves to work in the category  $\mathbf{Op}$ of finitely generated binary quadratic symmetric operads on vector spaces over a field $\mathbb{K}$.

Given a vector space $V$, we denote by $V^{\ten n}$, $V^{\odot n}$, $V^{\wedge n}$, $n\geq0$, the tensor powers, symmetric powers and exterior powers of $V$ respectively. We shall always denote by $(\Oh,\cdot_i,\bullet_j,[-,-]_k)$ an operad in $\mathbf{Op}$ generated by non-symmetric products $\cdot_i$, $i=1,\ldots,p$, commutative products~$\bullet_j$, $j=1,\ldots,q$, and anti-commutative brackets $[-,-]_k$, $k=1,\ldots,r$: the relations shall be omitted from the notation. For the definitions of the Koszul duality functor $-^!:\mathbf{Op}\to\mathbf{Op}$ and the Manin white and black products $-\circ-,-\bullet-:\mathbf{Op}\times\mathbf{Op}\to\mathbf{Op}$ we refer to \cite{V,LV}. For the definitions of the various operads we will consider, where not already specified, we refer to \cite{LV,zinb}.

We denote by $\Delta_1$ the standard $1$-simplex, which shall be represented as an arrow $\to$, by $v_i\subset\Delta_1$, $i=l,r$, the left and right vertex respectively, and by $\de\Delta_1=v_l\sqcup v_r\subset\Delta_1$ the boundary. Given a vector space $V$, we shall denote by $C^\ast(\Delta_1;V)$ (resp.: $C^*(\Delta_1\de\Delta_1;V)$, $C^*(\Delta_1,v_i;V)$, $i=l,r$) the usual complex of non-degenerate (resp.: relative) cochains with coefficients in $V$. We shall depict a $0$-cochain in $C^*(\Delta_1;V)$ as $_x\to_y$, $x,y\in V$ (moreover, we write $_x\to$ for $_x\to_0$ and $\to_y$ for $_0\to_y$), and a $1$-cochain as $\xrightarrow{x}$, similarly in the relative cases. The differential $d:C^0(\Delta_1;V)\to C^1(\Delta_1;V)$ is the usual one $d(_x\to_y)=\xrightarrow{y-x}$.
\section{A trick to compute Manin black products}

In this section we shall introduce, and compute in several cases, endofunctors $\mathbf{Op}\to\mathbf{Op}$ which we denote by  $\op{Ass}_\bullet(-)$, $\op{Com}_\bullet(-)$,  $\op{preLie}_\bullet(-)$, later we shall prove that these coincide with the functors  $\sA ss\bullet-$, $\sC om\bullet-$, $pre\sL ie\bullet-$ respectively.



\begin{definition}\label{def:ManinBlack} Given an operad $(\Oh,\cdot_i,\bullet_j,[-,-]_k)$ in $\mathbf{Op}$ and a vector space $V$, a structure of $\op{Ass}_\bullet(\Oh)$-algebra on $V$ is the datum of operations 
\begin{multline*} \cdot'_i:C^\ast(\Delta_1;V)^{\ten 2}\to C^*(\Delta_1;V),\quad\bullet'_j:C^\ast(\Delta_1;V)^{\odot2}\to C^*(\Delta_1;V)\\\mbox{and}\quad[-,-]'_k:C^\ast(\Delta_1;V)^{\wedge2}\to C^*(\Delta_1;V)\quad\mbox{such that}\end{multline*}
\begin{enumerate}
\item\label{item:dg} $(C^*(\Delta_1;V),d,\cdot'_i,\bullet'_j,[-,-]'_k)$ is a dg $\Oh$-algebra structure on $C^*(\Delta_1;V)$, and moreover the following locality assumption holds:\\

\item\label{item:locality} for all closed subsets $X\subset\Delta_1$ (i.e., $X=v_l,v_r,\de\Delta_1$, cf. the preliminary remarks) the complex of relative cochains $C^*(\Delta_1,X;V)\subset C^*(\Delta_1;V)$ is a dg $\Oh$-ideal.
\end{enumerate}
\end{definition}
The functor $\op{preLie}_\bullet(-)$ is defined similarly. 
\begin{definition} A $\op{preLie}_\bullet(\Oh)$-algebra structure on $V$ is the datum of a dg $\Oh$-algebra structure $(C^*(\Delta_1,v_l;V),d,\cdot'_i,\bullet'_j,[-,-]'_k)$ on $C^*(\Delta_1,v_l;V)$ (we notice that in this case the locality assumption, that is, the fact that $C^*(\Delta_1,\de\Delta_1;V)\subset C^*(\Delta_1,v_l;V)$ is a dg $\Oh$-ideal, is satisfied for trivial degree reasons).
\end{definition}  
\begin{remark}\label{rem:prelie}
	This actually defines $\op{preLie}_\bullet(-)=\op{preLie}_{r,\bullet}(-)$, by replacing the vertex $v_l$ with the one $v_r$ in the previous definition we get a second endofunctor $\op{preLie}_{l,\bullet}(-)$ : more on this in Remark~\ref{rem:leftcase}.
\end{remark} 
\newcommand{\w}{\widetilde}

It is not immediately obvious that this defines endofunctors $\mathbf{Op}\to\mathbf{Op}$: we consider first the case of $\op{Ass}_\bullet(-)$. Given the datum $(C^\ast(\Delta_1;V),d,\cdot_i',\bullet_j',[-,-]_k')$ of a local dg $\Oh$-algebra structure on $(C^\ast(\Delta_1;V),d)$, we notice that the locality assumption implies $_x\to\cdot_i'\to_y\:=\:\to_x\cdot_i'\,\,_y\!\to=0$, thus, applying the differential $d$ and Leibniz rule we see that $_x\to\cdot_i'\xrightarrow{y}\:=\:\xrightarrow{x}\cdot_i'\to_y$, and similarly $\to_x\cdot_i'\xrightarrow{y}\:=\:\xrightarrow{x}\cdot_i'\,\,_y\to$. We define non-symmetric products $\prec_i,\succ_i:V^{\ten 2}\to V$ on $V$ by 
\[ _x\to\cdot_i'\xrightarrow{y}\,\,=:\,\,\xrightarrow{x\prec_iy}\,\,:=\,\,\xrightarrow{x}\cdot_i'\to_y,\qquad\to_x\cdot_i'\xrightarrow{y}\,\,=:\,\,-\xrightarrow{y\succ_i x}\,\,:=\,\,\xrightarrow{x}\cdot_i'\,\,_y\to.\]
Always by locality and Leibniz rule we have a product $\cdot_i:V^{\ten2}\to V$ defined equivalently by $\to_x\cdot_i'\to_y=\to_{x\cdot_iy}$ or $_x\to\cdot_i'\,_y\to=_{x\cdot_i y}\to$, and moreover $x\cdot_i y= x\prec_i y - y\succ_i x$. In fact, 
\[\xrightarrow{x\cdot_iy}=d\left(\to_x\cdot_i'\to_y\right)=\xrightarrow{x}\cdot_i'\to_y+\to_x\cdot_i'\xrightarrow{y}=\xrightarrow{x\prec_iy-y\succ_ix}=-d\left(_x\to\cdot_i'\,_y\to\right).\]
In the same way, there are non-symmetric products $\circ_j,\ast_k:V^{\ten2}\to V$ on $V$, defined by the formulas 
\begin{multline*} _x\to\bullet_j'\xrightarrow{y}\,\,=\,\,\xrightarrow{x}\bullet_j'\to_y\,\,=\,\,\xrightarrow{y}\bullet_j'\,\,_x\to\,\,=\,\,\to_y\bullet_j'\xrightarrow{x}\,\,=:\,\,\xrightarrow{x\circ_j y},\\ \left[ _x\to,\xrightarrow{y}\right]_k'\,\,=\,\, \left[ \xrightarrow{x},\to_y\right]_k' \,\,=\,\, -\left[\xrightarrow{y},\,_x\to\right]_k' \,\,=\,\, -\left[ \to_y,\xrightarrow{x}\right]_k'\,\,=:\,\, \xrightarrow{x\ast_k y},\end{multline*}
products $\bullet_j:V^{\odot 2}\to V$, $[-,-]_k:V^{\wedge2}\to V$ defined by 
\[\to_x\bullet_j'\to_y=\to_{x\bullet_j y},\qquad_x\to\bullet_j'\,_y\to=_{x\bullet_jy}\to,\qquad \left[ \to_x,\to_y\right]_k'=\to_{[x,y]_k},\qquad\left[ _x\to,\,_y\to \right]_k'=_{[x,y]_k}\to, \]
and moreover by Leibniz rule
\[ x\bullet_j y = x\circ_j y + y \circ_j x, \qquad [x,y]_k=x\ast_k y - y\ast_k x.\]

\emph{Warning:} the previous formulas will be used extensively troughout the computations of this section without further mention.
 
In other words, the datum of an $\op{Ass}_\bullet(\Oh)$-algebra structure on $V$ is the same as the one of operations $\prec_i,\succ_i,\circ_j,\ast_k:V^{\ten 2}\to V$ on $V$, inducing operations $\cdot_i',\bullet_j',[-,-]_k'$ on $C^\ast(\Delta_1;V)$ via the previous formulas: the requirement that these make the latter into a (by construction, local dg) $\Oh$-algebra translates into a finite set of terniary relations on the operations $\prec_i,\succ_i,\circ_j,\ast_k$. More precisely, for every relation $R(x,y,z)=0$ satisfied in an $\Oh$-algebra, we get six (in general not independent) relations in the operad $\op{Ass}_\bullet(\Oh)$ as in the next lemma.

\begin{lemma}\label{lem:rel} Given operations $\prec_i,\succ_i,\circ_j,\ast_k$ on $V$ inducing operations $\cdot_i',\bullet_j',[-,-]_k'$ on $C^\ast(\Delta_1;V)$ as above, then the latter is a local dg $\Oh$-algebra if and only if for every relation $R(x,y,z)=0$ in the operad $\Oh$ the six relations  
\begin{multline*}0=R(\xrightarrow{x},\to_y,\,_z\to)=R(\xrightarrow{x},\,_y\to,\to_z)=R(\to_x,\xrightarrow{y},\,_z\to)=\\=R(_x\to,\xrightarrow{y},\to_z)=R(\to_x,\,_y\to,\xrightarrow{z})=R(_x\to,\to_y,\xrightarrow{z})=0,\end{multline*}
are satisfied.
\end{lemma}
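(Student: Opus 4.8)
The plan is to prove the equivalence by a finite case analysis, exploiting that $C^*(\Delta_1;V)$ is concentrated in cohomological degrees $0$ and $1$. For a relation $R$ of $\Oh$, write $R'(a,b,c)$ for the element of $C^*(\Delta_1;V)$ obtained by evaluating $R$ on $a,b,c$ via the induced operations $\cdot_i',\bullet_j',[-,-]_k'$; being a linear combination of ternary composites of these products, $R'$ is a degree-zero multilinear operation, and since the induced operations already satisfy locality and the Leibniz rule by construction, $R'$ is compatible with $d$ in the sense $d\,R'(a,b,c)=R'(da,b,c)\pm R'(a,db,c)\pm R'(a,b,dc)$. As the induced structure is dg and local by construction, saying that $(C^*(\Delta_1;V),d,\cdot_i',\bullet_j',[-,-]_k')$ is a local dg $\Oh$-algebra is exactly the requirement $R'\equiv0$ for every relation $R$. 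The forward implication is then immediate, the six listed expressions being particular values of $R'$. For the converse I would assume the six relations and deduce $R'\equiv0$ by multilinearity, checking $R'(a,b,c)=0$ as $a,b,c$ range over the three basis types ${}_x\to\in V_0$, $\to_y\in V_1$, $\xrightarrow{z}\in V_{01}$.

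First I would dispose of the trivial cases. If at least two of $a,b,c$ lie in $V_{01}$, then $R'(a,b,c)$ has degree $\ge2$ and vanishes for degree reasons. If all three lie in degree $0$ but not at the same vertex, then in every ternary tree built from two binary products either the inner or the outer product pairs cochains supported at the two different vertices; since such mixed degree-zero products vanish (by the locality assumption together with the degree argument already used before the lemma to show ${}_x\to\cdot_i'\to_y=0$, and likewise for $\bullet_j',[-,-]_k'$), each tree is zero and hence $R'(a,b,c)=0$. This leaves the degree-one evaluations with exactly one edge input and the degree-zero evaluations with all three inputs at the same vertex.

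The heart of the argument is to reduce these remaining cases to the six hypotheses using the differential and the identity $\xrightarrow{w}=d(\to_w)=-d({}_w\to)$. I would treat a degree-one evaluation whose two vertex inputs sit at the \emph{same} vertex $V_a$ by writing the edge input as $d$ of the \emph{opposite} vertex $V_{1-a}$ and expanding by the Leibniz rule for $R'$: the term in which $d$ is absorbed back into $R'$ is an all-degree-zero evaluation with one input at $V_{1-a}$ and two at $V_a$, hence mixed, hence zero by the previous paragraph, while the two surviving terms are degree-one evaluations with the edge at a new position and the two vertex inputs now at \emph{opposite} vertices, i.e. exactly the hypotheses. This shows that all same-vertex degree-one evaluations vanish, using only the six relations and locality. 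Finally, for the all-same-vertex degree-zero evaluation $R'({}_x\to,{}_y\to,{}_z\to)={}_{R(x,y,z)}\to$, applying $d$ and Leibniz expresses $d\,R'({}_x\to,{}_y\to,{}_z\to)$ as a sum of the three same-vertex degree-one evaluations just shown to vanish; since $d$ is injective on $V_0$, because $d({}_w\to)=\xrightarrow{-w}$, this forces $R'({}_x\to,{}_y\to,{}_z\to)=0$, and symmetrically on $V_1$.

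The main obstacle is bookkeeping rather than depth: one must order the reductions to avoid circularity, first the same-vertex degree-one cases (from the six plus locality), then the all-same-vertex degree-zero cases (from the former plus injectivity of $d$), and one must check that the choice ``edge $=d$(opposite vertex)'' always routes the Leibniz expansion into one manifestly mixed degree-zero term plus two of the six prescribed opposite-vertex relations, for each of the three edge positions and both vertices. Signs are immaterial throughout, since every input occurring in a Leibniz expansion has degree $0$, so no Koszul sign is produced beyond the explicit $\mp$ in $d(\to_w)=-d({}_w\to)$.
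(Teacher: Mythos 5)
Your proposal is correct and follows essentially the same route as the paper's own proof: the identical case analysis (evaluations of total degree $\geq 2$ vanish trivially, mixed degree-$0$ evaluations vanish by locality, same-vertex degree-$1$ evaluations reduce via the Leibniz rule to the six opposite-vertex hypotheses plus a mixed degree-$0$ term, and the all-same-vertex degree-$0$ evaluations $R({}_x\to,{}_y\to,{}_z\to)$, $R(\to_x,\to_y,\to_z)$ follow by applying $d$ and using injectivity of $d$ on vertex cochains). The only difference is presentational: you solve for the edge input written as $d$ of the opposite vertex, while the paper applies $d$ to the vanishing mixed relation and isolates the same term, which is the same Leibniz identity read in the other direction.
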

\begin{proof} The only if part is clear. For the if part we have to show that the given relations imply all the others. First of all, we notice that the only relations not trivially satisfied are those in total degree 0 or 1. Moreover, in total degree 0 all necessary relations follow by the locality assumption but the ones $R(_x\to,\,_y\to,\,_z\to)=0=R(\to_x,\to_y,\to_z)$. In total degree 1, all the necessary relations are satisfied by hypothesis but the ones
\begin{multline*}0=R(_x\to,\,_y\to,\xrightarrow{z})=R(\to_x,\to_y,\xrightarrow{z})=R(_x\to,\xrightarrow{y},\,_z\to)=\\=R(\to_x,\xrightarrow{y},\to_z)=R(\xrightarrow{x},\,_y\to,\,_z\to)=R(\xrightarrow{x},\to_y,\to_z)=0.\end{multline*}
For instance, to prove the first one we apply $d$ to the relation $0=R(_x\to,\,_y\to,\to_z)$ (which follows by locality), and by Leibniz rule and the hypothesis of the lemma
\[0 = -R(\xrightarrow{x},\,_y\to,\to_z)-R(_x\to,\xrightarrow{y},\to_z)+R(_x\to,\,_y\to,\xrightarrow{z})=R(_x\to,\,_y\to,\xrightarrow{z}). \]
The others are proved similarly. Finally, if we denote by $R(x,y,z)$ the relation $R$ computed in the operations $\cdot_i$, $\bullet_j$, $[-,-]_k$ on $V$, we have $R(\to_x,\to_y,\to_z)=\to_{R(x,y,z)}$, and by Leibniz rule
\[\xrightarrow{R(x,y,z)}=d\left(\to_{R(x,y,z)}\right)=R(\xrightarrow{x},\to_y,\to_z)+R(\to_x,\xrightarrow{y},\to_z)+R(\to_x,\to_y,\xrightarrow{z})=0,\]
hence $R(\to_x,\to_y,\to_z)=0$, and $R(_x\to,\,_y\to,\,_z\to)=0$ is proved similarly.
\end{proof}
\begin{example}\label{ex:ass} We consider the operad $\sL ie$ of Lie algebras. By the previous argument,  an $\op{Ass}_\bullet(\sL ie)$-algebra structure on $V$ is the datum of $\ast:V^{\ten 2}\to V$ such that
\begin{multline*} \left[\left[_x\to,\to_y\right],\xrightarrow{z}\right]=0=\left[ _x\to,\left[\to_y,\xrightarrow{z}\right]\right]+\left[\left[_x\to,\xrightarrow{z}\right],\to_y\right]=\\=\left[_x\to,\xrightarrow{-z\ast y}\right]+\left[\xrightarrow{x\ast z},\to_y\right]=\xrightarrow{-x\ast(z\ast y)+(x\ast z)\ast y},\qquad\forall x,y,z\in V.
\end{multline*}
In other words, $\ast$ is an associative product on $V$. The other five relations to be checked in the previous lemma follow from this one by symmetry of the Jacobi identity, thus, we see that $\op{Ass}_\bullet(\sL ie)=\sA ss$, the operad of associative algebras.  \end{example}

We may repeat the previous considerations in the case of $\op{preLie}_\bullet(-)$, showing that a dg $\Oh$-algebra structure $(C^\ast(\Delta_1,v_l;V),d,\cdot_i'.\bullet_j',[-,-]_k')$ on the complex $C^\ast(\Delta_1,v_l;V)$ is determined, via the previous formulas, by operations $\prec_i,\succ_i,\circ_j,\ast_k:V^{\ten 2}\to V$ on $V$. The analog of Lemma \ref{lem:rel} is the following
\begin{lemma}\label{lem:relpl}○ Given operations $\prec_i,\succ_i,\circ_j,\ast_k$ on $V$ inducing operations $\cdot_i',\bullet_j',[-,-]_k'$ on $C^\ast(\Delta_1,v_l;V)$ as above, then the latter is a local dg $\Oh$-algebra if and only if for every relation $R(x,y,z)=0$ in the operad $\Oh$ the three relations  
\begin{equation*}R(\xrightarrow{x},\to_y,\to_z)=R(\to_x,\xrightarrow{y},\to_z)=R(\to_x,\to_y,\xrightarrow{z})=0,\end{equation*}
are satisfied. 
\end{lemma}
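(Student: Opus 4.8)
The plan is to follow the proof of Lemma \ref{lem:rel}, taking advantage of the fact that the relative complex is even simpler than the absolute one. Here $C^*(\Delta_1,v_l;V)$ is concentrated in degrees $0$ and $1$, with degree-$0$ part $V_1=\{\to_x\}$ and degree-$1$ part $V_{01}=\{\xrightarrow{x}\}$, and the differential $d$, given by $d(\to_y)=\xrightarrow{y}$, is now an \emph{isomorphism} $V_1\xrightarrow{\sim}V_{01}$ rather than merely a map with one-dimensional cohomology. As in Lemma \ref{lem:rel}, the only-if direction is immediate: a dg $\Oh$-algebra satisfies every relation of $\Oh$, in particular the three displayed ones.

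For the if direction I would first discard the relations that hold for free. The induced operations are graded of degree $0$, so $R(a,b,c)$ sits in degree $\deg a+\deg b+\deg c$; since the complex vanishes in degrees $\geq 2$, any evaluation of total degree $\geq 2$ is automatically zero. The surviving relations have total degree $0$ or $1$. In total degree $1$ exactly one slot carries a $1$-cochain $\xrightarrow{\,\cdot\,}$ while the other two carry $0$-cochains $\to_{\cdot}$, and these are precisely the three relations assumed in the statement. Hence the whole if direction comes down to the single total-degree-$0$ relation $R(\to_x,\to_y,\to_z)=0$.

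To produce this relation I would apply $d$ and use that, by construction, it is a graded derivation for the induced operations, exactly as in the final computation of Lemma \ref{lem:rel}. Since $\to_x,\to_y,\to_z$ all lie in degree $0$, the Leibniz rule yields
\[
d\bigl(R(\to_x,\to_y,\to_z)\bigr)=R(\xrightarrow{x},\to_y,\to_z)+R(\to_x,\xrightarrow{y},\to_z)+R(\to_x,\to_y,\xrightarrow{z}),
\]
whose right-hand side vanishes by hypothesis. As $d$ is injective (indeed an isomorphism) on $C^0(\Delta_1,v_l;V)$, this forces $R(\to_x,\to_y,\to_z)=0$, finishing the argument.

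I do not expect a genuine obstacle: this is a streamlined version of Lemma \ref{lem:rel}. The only point needing care is the bookkeeping caused by collapsing the vertex $v_l$, which deletes the copy $V_0$ and thereby reduces the six relations of Lemma \ref{lem:rel} to three and the two total-degree-$0$ relations to one. Checking that the three listed relations really exhaust the total-degree-$1$ cases, and that the induced operations still obey the Leibniz rule once $V_0$ has been removed, is all that must be verified explicitly.
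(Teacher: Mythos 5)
Your proof is correct and is essentially the paper's argument: the paper simply says the lemma "follows from the proof of Lemma \ref{lem:rel}", and what you wrote is exactly that proof specialized to the relative complex, where the degree bookkeeping leaves only the three stated degree-one relations plus the single degree-zero one, which you recover by applying $d$, the Leibniz rule, and the injectivity of $d$ on $C^0(\Delta_1,v_l;V)$. No gaps; the observation that $d$ is an isomorphism in the relative case is precisely what makes the six relations of Lemma \ref{lem:rel} collapse to three here.
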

\begin{proof} It follows from the proof of Lemma \ref{lem:rel}.\end{proof}
\begin{example}\label{ex:prelie} A $\op{preLie}_\bullet(\sL ie)$-algebra structure on $V$ is the datum of $\ast:V^{\ten 2}\to V$ such that, where as usual $[x,y]=x\ast y - y \ast x$,
\begin{multline*} \left[\xrightarrow{x},\left[\to_y,\to_z\right]\right]=\left[\xrightarrow{x},\to_{[y,z]} \right]=\xrightarrow{x\ast[y,z]}=\\=\left[ \left[\xrightarrow{x},\to_y\right],\to_z\right]+\left[\to_y,\left[\xrightarrow{x},\to_z\right]\right]=\left[\xrightarrow{x\ast y},\to_z\right]+\left[\to_y,\xrightarrow{x\ast z}\right]=\xrightarrow{(x\ast y)\ast z- (x\ast z)\ast y},\qquad\forall x,y,z\in V.
\end{multline*}
In other words, $\ast$ is a right pre-Lie product on $V$. Since the remaining two relations to be checked in the previous lemma follow from this one by symmetry of the Jacobi identity, we see that $\op{preLie}_\bullet(\sL ie)=pre\sL ie$, the operad of (right) pre-Lie algebras. \end{example}
\begin{remark}\label{rem:leftcase} As was said in Remark \ref{rem:prelie} the above construction is actually the one of $\op{preLie}_\bullet(\sL ie)=\op{preLie}_{r,\bullet}(\sL ie)$: applying the functor $\op{preLie}_{l,\bullet}(-)$ introduced there we find by similar computations as in the above example that $\op{preLie}_{l,\bullet}(\sL ie)$ is the operad of \emph{left} pre-Lie algebras. More in general, it is easy to see that $\op{preLie}_{l,\bullet}(\Oh)$ is always the opposite of the operad $\op{preLie}_{r,\bullet}(\Oh)=\op{preLie}_{\bullet}(\Oh)$ for any $\Oh\in\mathbf{Op}$. We won't insist further on this point, and restrict our computations to the right case.
\end{remark}

By the proof of Lemma \ref{lem:rel} we have morphisms of operads $\Oh\to \op{Ass}_\bullet(\Oh)$, $\Oh\to \op{preLie}_\bullet(\Oh)$, sending an $\op{Ass}_\bullet(\Oh)$-algebra structure $(V,\prec_i,\succ_i,\circ_j,\ast_k)$ on $V$ to the associated $\Oh$-algebra structure \[(V\:\:,\:\:x\cdot_iy=x\prec_i y-y\succ_ix\:\:,\:\:x\bullet_j y=x\circ_j y+y\circ_j x\:\:,\:\:[x,y]_k=x\ast_k y-y\ast_k x),\] 
and similarly in the other case.

\begin{definition} A $\op{Com}_\bullet(\Oh)$-algebra is an $\op{Ass}_\bullet(\Oh)$-algebra with trivial associated $\Oh$-algebra. In other words, the operad $\op{Com}_\bullet(\Oh)$ is generated by non-symmetric products $\star_i$, anti-commutative brackets $\{-,-\}_j$ and commutative products $\circledast_k$, together with the relations obtained from the ones of $\op{Ass}_\bullet(\Oh)$-algebra by further imposing the identities \[x\prec_i y= y\succ_i x=: x\star_i y,\qquad x\circ_j y=-y\circ_j x =:\{x,y\}_j,\qquad x\ast_k y= y\ast_k x=: x\circledast_k y.\]
\end{definition}

\begin{example} We see immediately by Example \ref{ex:ass} that $\op{Com}_\bullet(\sL ie)=\sC om$, the operad of commutative and associative algebras. 
\end{example}

\begin{theorem}\label{th:black} For any operad $\Oh$ in $\mathbf{Op}$, we have natural isomorphisms
$\op{Ass}_\bullet(\Oh)=\sA ss\bullet\Oh$, $\op{Com}_\bullet(\Oh)=\sC om\bullet\Oh$, $\op{preLie}_\bullet(\Oh)=pre\sL ie\bullet\Oh$.
\end{theorem}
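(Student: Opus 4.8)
My plan is to realize each of the three endofunctors as the left adjoint of a Koszul-dual ``white'' cochain construction, and then to conclude from the defining adjunction of the Manin products together with uniqueness of adjoints. I would first make precise the white counterparts sketched in the introduction. Endowing $C^\ast(\Delta_1;V)=C^\ast(\Delta_1;\K)\ten V$ with the operations $\cup\ten(-)$ induced by the associative cup product of $C^\ast(\Delta_1;\K)$ defines a functor $\op{Ass}_\circ(-):\mathbf{Op}\to\mathbf{Op}$; replacing $\cup$ by the bracket $[-,-]=\cup-\cup^{op}$, and by the right permutative cup product on $C^\ast(\Delta_1,v_l;\K)$, defines $\op{Lie}_\circ(-)$ and $\op{Perm}_\circ(-)$. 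The first step is to check that these compute the Manin white products, $\op{Ass}_\circ(\Oh)=\sA ss\circ\Oh$, $\op{Lie}_\circ(\Oh)=\sL ie\circ\Oh$ and $\op{Perm}_\circ(\Oh)=\sP erm\circ\Oh$. This rests on the description of the white product as the operad of the operations and relations valid in the tensor product of a generic algebra of each type (\cite{V,LV}), together with the fact that $(C^\ast(\Delta_1;\K),\cup)$, $(C^\ast(\Delta_1;\K),[-,-])$ and $(C^\ast(\Delta_1,v_l;\K),\cup)$ are generic among associative, Lie and right permutative algebras, i.e. satisfy no relations beyond the structural ones; hence the relations imposed on the tensor operations are exactly those cutting out the white products.

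\textbf{The adjunctions.} Next I would establish that $\op{Ass}_\bullet(-)$, $\op{preLie}_\bullet(-)$ and $\op{Com}_\bullet(-)$ are left adjoint to $\op{Ass}_\circ(-)$, $\op{Perm}_\circ(-)$ and $\op{Lie}_\circ(-)$ respectively, by exhibiting natural bijections such as
\[\Hom_{\mathbf{Op}}(\op{Ass}_\bullet(\Oh),\sR)\;\cong\;\Hom_{\mathbf{Op}}(\Oh,\op{Ass}_\circ(\sR)).\]
The point is that both sides describe the same object: the datum, natural in the $\sR$-algebra $W$, of a local dg $\Oh$-algebra structure on $C^\ast(\Delta_1;W)$ whose operations are assembled from the cup products on $C^\ast(\Delta_1;\K)$ and the operations of $\sR$. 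On the left, by the structural analysis following Definition~\ref{def:ManinBlack} (locality and the Leibniz rule force the whole structure to be encoded by the mixed operations $\prec_i,\succ_i,\circ_j,\ast_k$), such a datum is the same as a realization of these operations inside $\sR$ satisfying the relations of Lemma~\ref{lem:rel}. On the right, a morphism $\Oh\to\sA ss\circ\sR$ realizes the operations of $\Oh$ inside $\sA ss\circ\sR$, and evaluating on the cochain algebra $C^\ast(\Delta_1;W)$ yields exactly such a local dg structure; the counit $\op{Ass}_\bullet(\op{Ass}_\circ(\Oh))\to\Oh$ is the tautological map noted in the introduction, while genericity ensures that the converse assignment is well defined. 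The permutative case is identical, with $C^\ast(\Delta_1,v_l;V)$ and Lemma~\ref{lem:relpl} in place of Lemma~\ref{lem:rel}; the commutative case uses the dg Lie model, the passage to $[-,-]=\cup-\cup^{op}$ being precisely what imposes the identifications $x\prec_iy=y\succ_ix$, $x\circ_jy=-y\circ_jx$, $x\ast_ky=y\ast_kx$ that present $\op{Com}_\bullet(\Oh)$ as the quotient of $\op{Ass}_\bullet(\Oh)$ with trivial associated $\Oh$-algebra.

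\textbf{Conclusion.} Finally I would invoke the defining adjunction of the Manin products: for all $\sP,\sQ,\sR$ in $\mathbf{Op}$ one has $\Hom_{\mathbf{Op}}(\sP\bullet\sQ,\sR)\cong\Hom_{\mathbf{Op}}(\sP,\sQ^!\circ\sR)$ (\cite{V,LV}), so that $-\bullet\sQ$ is left adjoint to $\sQ^!\circ-$. Taking $\sQ=\sA ss,pre\sL ie,\sC om$, using $\sA ss^!=\sA ss$, $pre\sL ie^!=\sP erm$, $\sC om^!=\sL ie$ and the symmetry of the black product, this exhibits $\sA ss\bullet-$, $pre\sL ie\bullet-$ and $\sC om\bullet-$ as left adjoint to $\sA ss\circ-$, $\sP erm\circ-$ and $\sL ie\circ-$ respectively. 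Comparing with the previous two paragraphs, in each case the new functor and the corresponding Manin product are both left adjoint to the same white functor, so by uniqueness of adjoints up to natural isomorphism we obtain $\op{Ass}_\bullet(\Oh)=\sA ss\bullet\Oh$, $\op{preLie}_\bullet(\Oh)=pre\sL ie\bullet\Oh$ and $\op{Com}_\bullet(\Oh)=\sC om\bullet\Oh$, as desired.

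\textbf{Main obstacle.} The hard part is the genericity input underlying the first step, together with the well-definedness of the bijection in the second: the easy direction is the counit, but one must check that a natural local dg $\Oh$-structure on the cochain complexes is genuinely induced by operations of $\sA ss\circ\sR$ (respectively $\sP erm\circ\sR$, $\sL ie\circ\sR$), and this is exactly where the assertion that $C^\ast(\Delta_1;\K)$ imposes no relations beyond associativity (respectively right permutativity; antisymmetry and Jacobi) is used in an essential way. A secondary delicate point is the bookkeeping in the commutative case, matching the ``trivial associated algebra'' presentation of $\op{Com}_\bullet$ with the antisymmetric bracket model on cochains.
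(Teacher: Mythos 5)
Your proposal is correct and takes essentially the same approach as the paper: there, Theorem \ref{th:black} is deduced by combining Theorems \ref{th:asswhite}, \ref{th:permwhite} and \ref{th:liewhite} (the cochain-level functors $\op{Ass}_\circ(-)$, $\op{Perm}_\circ(-)$, $\op{Lie}_\circ(-)$ compute the Manin white products) with Theorem \ref{th:adj} (adjointness of the black/white functor pairs), and then concluding via the known adjunction between $-\bullet\sQ$ and $\sQ^!\circ-$ together with uniqueness of left adjoints. The only cosmetic difference is that you package the adjunctions as natural hom-set bijections, whereas the paper exhibits the units and counits explicitly and verifies the triangle identities.
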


The proof is postponed to the next section (it follows from theorems \ref{th:asswhite}, \ref{th:permwhite}, \ref{th:liewhite} and \ref{th:adj}). In the remaining of this section we shall illustrate the result by several explicit computations. 

\begin{remark}\label{rem:curlyeqprec}
	In some of the following computations it is conventient to choose a different basis for the operations of $\op{Ass}_\bullet(\Oh)$ and $\op{preLie}_\bullet(\Oh)$, by replacing the products $\succ_i$ with the opposite products $\curlyeqsucc_i:=-\succ^{op}_i$: with the previous notations $\to_x\cdot'_i\xrightarrow{y}=:\xrightarrow{x\curlyeqsucc_i y}:=\xrightarrow{x}\cdot'_i\,_y\to$.
\end{remark}

\begin{example} Given non-negative integers $p,q,r$, we denote by $\sM ag_{p,q,r}$ the operad generated by $p$ magmatic non-symmetric operations, $q$ magmatic commutative operations and $r$ magmatic anti-commutative operations with no relations among them. The previous arguments show that $\op{Ass}_\bullet(\sM ag_{p,q,r} )=\op{preLie}_\bullet (\sM ag_{p,q,r})=\sM ag_{2p+q+r,0,0}$, whereas $\op{Com}_\bullet(\sM ag_{p,q,r})=\sM ag_{p,r,q}$.
\end{example}

We introduce a further notation.

\begin{definition}\label{def:+x} Given operads $\mathcal{O},\mathcal{P}\in\mathbf{Op}$, we denote by $\mathcal{O}+\mathcal{P}$ the operad defined by saying that an $(\mathcal{O}+\mathcal{P})$-algebra structure on $V$ is the datum of both an $\Oh$-algebra structure and a $\mathcal{P}$-algebra structure on $V$ with no relations among them, while we denote by $\Oh\times\mathcal{P}$ the operad defined in the same way and by further imposing that every triple product involving an operation from $\Oh$ and one from $\mathcal{P}$ vanishes. It is easy to see that the functors $-+-,-\times-:\mathbf{Op}\to\mathbf{Op}$ are Koszul dual in the sense that $(\Oh+\sP)^!=\Oh^!\times\sP^!$ for any pair of operads $\Oh,\sP\in\mathbf{Op}$. \end{definition}

\begin{example} We consider the operad $(\sA ss,\cdot)$ of associative algebras. Given an $\op{Ass}_\bullet(\sA ss)$-algebra structure $(V,\prec,\succ)$ on a vector space, we shall denote the associator of the associated product $\cdot'$ on $C^\ast(\Delta_1;V)$ by $A_{\cdot'}(-,-,-)$. Straightforward computations show that
\[ 0=A_{\cdot'}(_x\to,\to_y,\xrightarrow{z})\:=\: \left(_x\to\cdot'\to_y\right)\cdot'\xrightarrow{z}-\,_x\to\cdot'(\to_y\cdot'\xrightarrow{z}) \:=\: 0+\,_x\to\cdot'\xrightarrow{z\succ y}\:=\:\xrightarrow{x\prec(z\succ y)},\]
\begin{eqnarray} \nonumber 0=A_{\cdot'}\left(\xrightarrow{}_x,{}_y\xrightarrow{},\xrightarrow{z} \right) &=& \xrightarrow{(y\prec z)\succ x}, \\
\nonumber 0=A_{\cdot'}\left({}_x\xrightarrow{},\xrightarrow{y},\xrightarrow{}_z \right) &=& \xrightarrow{(x\prec y)\prec z-x\prec(y\prec z)}, \\
\nonumber 0=A_{\cdot'}\left(\xrightarrow{}_x,\xrightarrow{y},{}_z\xrightarrow{} \right) &=& \xrightarrow{z\succ(y\succ x)-(z\succ y)\succ x}, \\
\nonumber 0=A_{\cdot'}\left(\xrightarrow{x},{}_y\xrightarrow{},\xrightarrow{}_z \right) &=& \xrightarrow{-(y\succ x)\prec z},\\ \nonumber 0=A_{\cdot'}\left(\xrightarrow{x},\xrightarrow{}_y,{}_z\xrightarrow{} \right) &=& \xrightarrow{-z\succ(x\prec y)}. \end{eqnarray}
Conversely, according to Lemma \ref{lem:rel}, the above six relations on $\prec,\succ$ imply the vanishing of $A_{\cdot'}(-,-,-)$ on $C^*(\Delta_1;V)$. Hence, we see that an $\op{Ass}_\bullet( \sA ss)$-algebra structure on $V$ is the datum of two non-symmetric products $\prec,\succ:V^{\ten 2}\to V$ which are associative and such that moreover
\[x\prec(y\succ z)=(x\prec y)\succ z =x\succ(y\prec z)=(x\succ y)\prec z =0,\qquad\forall x,y,z\in V.\]
With the notations from the previous definition we found that that $\op{Ass}_\bullet( \sA ss)=\sA ss\times\sA ss$. By further imposing $x\prec y-y\succ x =: x\cdot y= 0$, that is, $x \prec y = y\succ x=:x\star y$, we see that the operad $\op{Com}_\bullet(\sA ss)$ is generated by a non symmetric product $\star$ such that $(x\star y)\star z=x\star (y\star z)=0$: in other words $\op{Com}_\bullet(\sA ss)= nil\sA ss$, the operad of two step nilpotent associative algebras, in accord with Theorem \ref{th:black} and the computations in \cite{GK}. To compute the relations of $\op{preLie}_\bullet(\sA ss)$ it is convenient to replace the generating set of operations $\prec,\succ$ with the one $\prec,\curlyeqsucc=-\succ^{op}$ as explained in Remark \ref{rem:curlyeqprec}, then we get the following relations according to Lemma \ref{lem:relpl} (notice that $x\cdot y:=x\prec y-y\succ x=x\prec y + x\curlyeqsucc y$)
\begin{eqnarray} \nonumber 0=A_{\cdot'}\left(\xrightarrow{x},\xrightarrow{}_y,\xrightarrow{}_z \right) &=& \xrightarrow{(x\prec y)\prec z-x\prec(y\cdot z)}, \\ 
\nonumber 0=A_{\cdot'}\left(\xrightarrow{}_x,\xrightarrow{y},\xrightarrow{}_z \right) & = & \xrightarrow{(x\curlyeqsucc y)\prec z-x\curlyeqsucc(y\prec z)}, \\ \nonumber 0=A_{\cdot'}\left(\xrightarrow{}_x,\xrightarrow{}_y,\xrightarrow{z} \right) &=& \xrightarrow{(x\cdot y)\curlyeqsucc z - x\curlyeqsucc(y\curlyeqsucc z)}, \end{eqnarray}
which are exactly the dendirform relations for the products $\prec,\curlyeqsucc$, thus $\op{preLie}_\bullet(\sA ss)=\sD end$, the operad of dendriform algebras, which as well known is also $\sD end=pre\sL ie\bullet\sA ss$.
\end{example}
\begin{example} Next we consider the operad $(\sC om,\bullet)$ of commutative and associative algebras. Given an $\op{Ass}_\bullet(\sC om)$ algebra structure $(V,\circ)$ on a vector space, together with the asociated commutative product $\bullet'$ on $C^*(\Delta_1;V)$, similar computations as in the previous example show that the vanishing of the associator $A_{\bullet'}(-,-,-)$ is equivalent to the identities $(x\circ y)\circ z = x\circ(y\circ z)=0$, thus recovering once again $\op{Com}_\bullet(\sA ss)= \sC om\bullet \sA ss = nil\sA ss$. If we further impose that the commutative product $x\bullet y = x\circ y + y\circ x$ on $V$ vanishes, we see that $\op{Com}_\bullet(\sC om)$ is the operad generated by an anti-commutative bracket $\{x,y\}:=x\circ y=-y\circ x$ such that $\{ \{ x,y \}, z \}=0$, or in other words $\op{Com}_\bullet(\sC om)=nil\sL ie$, the operad of two step nilpotent Lie algebras. Finally, a $\op{preLie}_\bullet(\sC om)$-algebra structure on $V$ is the datum of a non-symmetric product $\circ:V^{\ten 2}\to V$ such that, where as usual we denote by $x\bullet y=x\circ y + y\circ x$, \[ 0=A_{\bullet'}(\xrightarrow{x},\to_y,\to_z)=\xrightarrow{x\circ(y\bullet z)-(x\circ y) \circ z},\qquad\forall x,y,z\in V, \] 
and a second relation coming from Lemma \ref{lem:relpl}, namely, $(x\circ y)\circ z= (x\circ z)\circ y$, already follows from this one. We find that $\op{preLie}_\bullet(\sC om)=\sZ inb$, the operad of (right) Zinbiel algebra, according to the well known fact $pre\sL ie\bullet \sC om=\sZ inb$ \cite{V}.
\end{example}

\begin{example} We consider the operad $pre\sL ie$ of (right) pre-Lie algebras. Given an $\op{Ass}_\bullet(pre\sL ie)$-algebra structure $(V,\prec,\succ)$ on $V$, together with the associated dg pre-Lie algebra $(C^\ast(\Delta_1;V),d,\cdot')$, the associator $A_{\cdot'}(-,-,-)$ on $C^\ast(\Delta_1;V)$ is computed as in Example \ref{ex:ass}. Writing the generating relation in $pre\sL ie$ as $0=R(\alpha,\beta,\gamma)=A_{\cdot'}(\alpha,\beta,\gamma)-A_{\cdot'}(\alpha,\gamma,\beta)$, $\forall \alpha,\beta,\gamma\in C^*(\Delta_1;V)$, we find that according to Lemma \ref{lem:rel} this is equivalent to the following relations on $\prec,\succ$ (the remaining three follow from these ones by symmetry of $R$) 
	\[ 0 = R(_x\to,\to_y,\xrightarrow{z})=\xrightarrow{x\prec(z\succ y)-(x\prec z)\prec y+x\prec(z\prec y)}\]
	\[ 0 = R(\to_x,\,_y\to,\xrightarrow{z})=\xrightarrow{(y \prec z)\succ x -y\succ(z\succ x)+(y\succ z)\succ x}\]
	\[ 0 = R(\xrightarrow{x},\,_y\to,\to_z)=\xrightarrow{-(y\succ x)\prec z+ y\succ(x\prec z)}\]
		
These are precisely the relations of dendriform algebra, thus once again $\op{Ass}_\bullet(pre\sL ie)=\sD end$. If we impose that the right pre-Lie product $x \cdot y = x\prec y - y\succ x$ on $V$ vanishes, we get as well known \cite{LV} the Zinbiel relation for $x\star y:=x\prec y=y\succ x$, that is, once again $\op{Com}_\bullet(pre\sL ie)=\sZ inb$. Finally, in a $\op{preLie}_\bullet(pre\sL ie)$-algebra we get the relations (the remaining one follows from the first one and simmetry of $R$)
\[0 = R(\to_x,\to_y,\xrightarrow{z})=\xrightarrow{-z\succ (x\cdot y)-(z\succ y)\succ x+(z\succ x)\prec y-(z\prec y)\succ x}\]
\[0 = R(\xrightarrow{x},\to_y,\to_z)=\xrightarrow{(x\prec y)\prec z -(x\prec z)\prec y- x\prec(y\cdot z -z\cdot y)}\]
The reader will recognize the relations of $L$-dendriform algebra \cite{bai2,V}. We recall the proof of the following fact, giving the two forgetful functors from the category of $(pre\sL ie\bullet pre\sL ie)$-algebras to the one of $pre\sL ie$-algebras. 
\begin{proposition} Let $V$ be a vector space, together with non symmetric products $\prec,\succ:V^{\ten2}\to V$ such that the following relations, where we put $x\cdot y:= x\prec y-y\succ x$, $x\triangleleft y := x\prec y + x\succ y$, $[x,y]:=x\cdot y-y\cdot x=x\triangleleft y-y\triangleleft x$, are verified
	\[ x\prec [y,z]=(x\prec y)\prec z - (x\prec z)\prec y,\qquad (x\succ y)\prec z=x\succ(y\cdot z)+(x\triangleleft z)\succ y,\qquad\forall x,y,z\in V. \]
Then the products $\triangleleft,\cdot$ are right pre-Lie products on $V$ and $[-,-]$ is a Lie bracket.
\end{proposition}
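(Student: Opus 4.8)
The plan is to prove the three assertions in turn, first that $\cdot$ is right pre-Lie, then that $\triangleleft$ is right pre-Lie, and finally that $[-,-]$ is a Lie bracket; I would keep in mind throughout that the two displayed relations are exactly the defining relations of a $\op{preLie}_\bullet(pre\sL ie)$-algebra found in the preceding example (the first is $R(\xrightarrow{x},\to_y,\to_z)=0$ with $[y,z]=y\cdot z-z\cdot y$ substituted, the second is $R(\to_x,\to_y,\xrightarrow{z})=0$).

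For $\cdot$ there is a shortcut: $x\cdot y=x\prec y-y\succ x$ is precisely the associated $pre\sL ie$-product attached to $(V,\prec,\succ)$ by the canonical morphism of operads $pre\sL ie\to\op{preLie}_\bullet(pre\sL ie)$ exhibited right after Remark~\ref{rem:leftcase}, equivalently it is the product of the degree-$0$ subalgebra $V_1=\{\to_x:x\in V\}$ of the dg pre-Lie algebra $(C^\ast(\Delta_1,v_l;V),d,\cdot')$, since $\to_x\cdot'\to_y=\to_{x\cdot y}$; so $(V,\cdot)$ satisfies the pre-Lie identity automatically. If one prefers a direct check, it proceeds exactly as for $\triangleleft$ below.

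The main computational step is $\triangleleft$, for which I see no such shortcut: I would verify the right pre-Lie identity by expanding the difference $A_\triangleleft(x,y,z)-A_\triangleleft(x,z,y)$ of associators, where $x\triangleleft y=x\prec y+x\succ y$. This gives sixteen terms. The four terms of shape $x\prec(\cdots)$, together with the left-normed pair $(x\prec y)\prec z$ and $(x\prec z)\prec y$, cancel among themselves upon invoking the first relation and the elementary identity $[y,z]=(y\prec z-z\prec y)+(y\succ z-z\succ y)$. The remaining ten terms then telescope to zero after rewriting the two factors $(x\succ y)\prec z$ and $(x\succ z)\prec y$ by the second relation: each resulting summand of shape $x\succ(a\star b)$ or $(a\star b)\succ c$ is matched by exactly one of the surviving terms with the opposite sign. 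The only real difficulty here is the bookkeeping, so I would organize the terms by their outer operation before cancelling.

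Finally, $[-,-]$ is antisymmetric by the definition $[x,y]=x\cdot y-y\cdot x$, and the Jacobi identity is immediate from the standard fact that the commutator of any right pre-Lie product is a Lie bracket, applied to the product $\cdot$; equivalently $[-,-]$ is the restriction to $V_1$ of the bracket of the sub-adjacent dg Lie algebra of $(C^\ast(\Delta_1,v_l;V),\cdot')$, as $[\to_x,\to_y]=\to_x\cdot'\to_y-\to_y\cdot'\to_x=\to_{[x,y]}$. Since $x\cdot y-y\cdot x=x\triangleleft y-y\triangleleft x$, this same bracket is also the commutator of $\triangleleft$, so both forgetful functors $pre\sL ie\bullet pre\sL ie\to pre\sL ie$ lie over a common sub-adjacent Lie algebra.
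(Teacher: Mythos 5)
Your proposal is correct and follows essentially the same route as the paper: the pre-Lie identity for $\cdot$ (and hence the Jacobi identity for $[-,-]$) is obtained structurally, by recognizing the hypotheses as the relations of $\op{preLie}_\bullet(pre\sL ie)$ and invoking the canonical morphism $pre\sL ie\to\op{preLie}_\bullet(pre\sL ie)$ coming from the proof of Lemma~\ref{lem:rel}, while the pre-Lie identity for $\triangleleft$ is checked by a direct associator computation using the second relation to rewrite the terms $(x\succ y)\prec z$, $(x\succ z)\prec y$ and the first relation to handle the $\prec$-terms. The only difference is bookkeeping: the paper substitutes the second relation in the form $A_\succ(x,z,y)=(x\succ y)\prec z-(x\prec z)\succ y-x\succ(y\prec z)$ into $A_\triangleleft(x,y,z)$ and exhibits manifest symmetry in $y,z$, whereas you expand the full antisymmetrization $A_\triangleleft(x,y,z)-A_\triangleleft(x,z,y)$ into sixteen terms and cancel; both cancellations check out.
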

	
\begin{proof} The relations in the claim of the lemma imply the pre-Lie relation for $\cdot$ according to the previous computations and the proof of Lemma \ref{lem:rel}. The fact that $[-,-]$ is a Lie bracket follows. Finally, we may write the second relation as $A_{\succ}(x,z,y)=(x\succ y)\prec z - (x\prec z)\succ y -x\succ(y\prec z)$: substituting in 
		\[ A_{\triangleleft}(x,y,z)=A_{\prec}(x,y,z)+A_{\succ}(x,y,z)+(x\prec y)\succ z-x\prec(y\succ z)+(x\succ y)\prec z-x\succ(y\prec z)\]
		we find that 
		\begin{multline*}
		A_{\triangleleft}(x,y,z)= A_\prec(x,y,z)-x\prec(y\succ z)+\\+(x\succ z)\prec y-(x\prec y)\succ z-x\succ(z\prec y)+(x\prec y)\succ z+(x\succ y)\prec z-x\succ(y\prec z).
		\end{multline*}
		The bottom row is clearly symmetric in $y$ and $z$, while the same statement for the right hand side of the top row is just another way of writing the first relation in the claim of the lemma.\end{proof}
\end{example}

\begin{example}\label{ex:leib} An interesting example is the operad $\sL eib$ of (right) Leibniz algebras: this is the operad generated by a single non symmetric product $\cdot$ and the relation $0=R_\cdot(x,y,z):=(x\cdot y)\cdot z-x\cdot(y\cdot z)-(x\cdot z)\cdot y$. From Lemma \ref{lem:rel} we get the following five indipendent relations on $\prec,\succ$
	\begin{multline*} 0=R_{\cdot'}(_x\to,\to_y,\xrightarrow{z})\:=\: \left(_x\to\cdot'\to_y\right)\cdot'\xrightarrow{z}-\,_x\to\cdot'(\to_y\cdot'\xrightarrow{z})-(_x\to\cdot'\xrightarrow{z})\cdot'\to_y \:= \\ 0+\,_x\to\cdot'\xrightarrow{z\succ y}-\xrightarrow{x\prec z}\cdot'\to_y\:=\:\xrightarrow{x\prec(z\succ y)-(x\prec z)\prec y},\end{multline*}
	
	\begin{eqnarray} \nonumber 0=R_{\cdot'}\left(\xrightarrow{}_x,{}_y\xrightarrow{},\xrightarrow{z} \right) &=& \xrightarrow{(y\prec z)\succ x-y\succ(z\succ x)}, \\
	\nonumber 0=R_{\cdot'}\left({}_x\xrightarrow{},\xrightarrow{y},\xrightarrow{}_z \right) &=& \xrightarrow{(x\prec y)\prec z-x\prec(y\prec z)}, \\
	\nonumber 0=R_{\cdot'}\left(\xrightarrow{}_x,\xrightarrow{y},{}_z\xrightarrow{} \right) &=& \xrightarrow{z\succ(y\succ x)-(z\succ y)\succ x}, \\
	\nonumber 0=R_{\cdot'}\left(\xrightarrow{x},{}_y\xrightarrow{},\xrightarrow{}_z \right) &=& \xrightarrow{-(y\succ x)\prec z+y\succ(x\prec z)},
	\end{eqnarray}
	and the remaining one is equivalent to the last one.
These are the precisely the relations of diassociative algebra on $(V,\prec,\succ)$, in other words we found that $\op{Ass}_\bullet(\sL eib)=di\sA ss$, the operad of diassociative algebras. If we further impose $0=x\cdot y=x\prec y -y\succ x$ and we put as usual $x\star y :=x\prec y=y\succ x$, the previous relations reduce to the two independent ones $(x\star y)\star z=x\star(y\star z)$, $(x\star y)\star z=x\star(z\star y)$: in other words, we found that $\op{Com}_\bullet(\sL eib)=\sP erm$, the operad of (right) permutative algebras. Finally,  we consider the operad $\op{preLie}_\bullet(\sL eib)$: this is generated by non-symmetric products $\prec,\succ$ and the relations, where as usual $x\cdot y= x\prec y- y\succ x$, 
\begin{eqnarray} \nonumber 0=R_{\cdot'}\left(\xrightarrow{x},\xrightarrow{}_y,\xrightarrow{}_z \right) &=& \xrightarrow{(x\prec y)\prec z-(x\prec z)\prec y-x\prec(y\cdot z)}, \\
\nonumber 0=R_{\cdot'}\left(\xrightarrow{}_x,\xrightarrow{y},\xrightarrow{}_z \right) &=& \xrightarrow{-(y\succ x)\prec z+(y\prec z)\succ x+y\succ(x\cdot z)}, \\
\nonumber 0=R_{\cdot'}\left(\xrightarrow{}_x,\xrightarrow{}_y,\xrightarrow{z} \right) &=& \xrightarrow{-z\succ(x\cdot y)-(z\succ y)\succ x+(z\succ x)\prec y}.
\end{eqnarray}
These are equivalent to the following relations for $\prec,\succ$
\begin{multline*} x\prec(y\cdot z)=(x\prec y)\prec z -(x\prec z)\prec y,\qquad (x\succ y)\succ z=(x\prec y)\succ z, \\ x\succ(y\cdot z)=(x\succ y)\prec z-(x\prec z)\succ y. \end{multline*}

\end{example}

\begin{example}\label{ex:pois} We consider the operad $(\sP ois,\bullet, [-,-])$ of commutative Poisson algebras. By the previous computations, an $\operatorname{Ass}_\bullet(\sP ois)$-algebra structure on $V$ is the datum $(V,\circ,\ast)$ of an $\op{Ass}_\bullet(\sC om)=nil\sA ss$-algebra structure $(V,\circ)$ and an $\op{Ass}_\bullet(\sL ie)=\sA ss$-algebra structure $(V,\ast)$ satisfying the additional relations induced, as in Lemma \ref{lem:rel}, by the Poisson identity $0=[x,y\bullet z]-[x,y]\bullet z - y\bullet[x,z]$ in the operad $\sP ois$. These additional relations are easily computed, for instance
	\begin{multline*}  0=\left[\xrightarrow{x},\,_y\to\bullet'\to_z\right]' - \left[\xrightarrow{x},\,_y\to\right]'\bullet'\to_z-\,_y\to\bullet'\left[\xrightarrow{x},\to_z\right]' = \\=\,0\,-\xrightarrow{-y\ast x}\cdot'\to_z -\,_y\to\cdot'\xrightarrow{x\ast z}\:=\:\xrightarrow{(y\ast x)\circ z - y\circ (x\ast z)}.
	\end{multline*} 
	Proceeding in this way we find the following relations for $\circ,\ast$
	\[ (x\ast y)\circ z= x\ast(y\circ z)=x\circ(y\ast z)= (x\circ y)\ast z.\] 
	Similarly, a $\operatorname{Com}_\bullet(\sP ois)$-algebra structure on $V$ is the datum $(V,\{-,-\},\circledast)$ of a $\operatorname{Com}_\bullet(\sC  om)=nil\sL ie$-algebra structure $(V,\{-,-\})$ and a $\operatorname{Com}_\bullet(\sL ie)=\sC om$-algebra structure $(V,\circledast)$ such that moreover the above identities hold. Consider the one $\{x,y\}\circledast z=\{x\circledast y,z \} $: as the left and right hand side are respectively symmetric and anti-symmetric in $x$ and $y$ both vanish, thus we have the relations
	\[ \{x\circledast y,z \}=x\circledast\{y,z\}=0,\qquad\forall x,y,z\in V.\]
	In other words, we found $\operatorname{Com}_\bullet(\sP ois)=nil\sL ie\times\sC om$. Finally, a $\op{preLie}_\bullet(\sP ois)$-algebra structure on $V$ is the datum $(V,\circ,\ast)$ of a $\op{preLie}_\bullet(\sC om)=\sZ inb$-algebra structure $(V,\circ)$ and a $\op{preLie}_\bullet(\sL ie)=pre\sL ie$-algebra structure $(V,\ast)$, such that moreover, where as usual we denote by $x\bullet y =x\circ y+ y\circ x$ and $[x,y]=x\ast y-y\ast x$ the associated $\sC om$- and $\sL ie$-algebra structures on $V$ respectively,
	\begin{multline*}  0=\left[\xrightarrow{x},\to_y\bullet'\to_z\right]' - \left[\xrightarrow{x},\,\to_y\right]'\bullet'\to_z-\to_y\bullet'\left[\xrightarrow{x},\to_z\right]' = \\=\left[\xrightarrow{x},\to_{y\bullet z}\right]'-\xrightarrow{x\ast y}\bullet'\to_z -\to_y\bullet'\xrightarrow{x\ast z}=\xrightarrow{x\ast(y\bullet z) -(x\ast y)\circ z - (x\ast z)\circ y},
	\end{multline*} 
	\begin{multline*}  0=\left[\to_x,\xrightarrow{y}\bullet'\to_z\right]' - \left[\to_x,\xrightarrow{y}\right]'\bullet'\to_z-\xrightarrow{y}\bullet'\large[\to_x,\to_z\large]' = \\=\left[\to_x,\xrightarrow{y\circ z}\right]'-\xrightarrow{-y\ast x}\bullet'\to_z -\xrightarrow{y}\bullet'\to_{[x,z]}=\xrightarrow{-(y\circ z)\ast x+(y\ast x)\circ z - y\circ [x,z]}.
	\end{multline*} 
	We conclude that $\op{preLie}_\bullet(\sP ois)=pre\sP ois$, Aguiar's operad of right pre-Poisson algebras \cite{A}, in accord with the fact \cite{U} that $pre\sL ie\bullet\sP ois = pre\sP ois$.
\end{example}

\begin{example}\label{ex:perm} We consider the operad $\sP erm$: recall that a $\sP erm$-algebra structure, or (right) permutative algebra structure, on $V$ is the datum of an associative product $\cdot$ which is commutative on the right hand side whenever there are three or more variables, or, in other words, satisfies the relations $(x\cdot y)\cdot z=x\cdot(y\cdot z)=x\cdot(z\cdot y)$. Thus, an $\op{Ass}_\bullet(\sP erm)$-algebra structure on $V$ is the datum of an $\op{Ass}_\bullet(\sA ss)=\sA ss\times\sA ss$ algebra structure $(V,\prec,\prec)$, as in Example \ref{ex:ass}, such that moreover
\[ 0 \:=\: _x\to\cdot'\left( \to_y\cdot'\xrightarrow{z}\right)\,-\,_x\to\cdot'\left( \xrightarrow{z}\cdot'\to_y\right) =\xrightarrow{-x\prec(z\prec y)-x\prec(z\prec y)}=\xrightarrow{-x\prec(y\prec z)}\]
\[ 0 \:=\: \to_x\cdot'\left( _y\to\cdot'\xrightarrow{z}\right)-\to_x\cdot'\left( \xrightarrow{z}\cdot'\,_y\to \right) =\xrightarrow{-(y\prec z)\prec x-(y\prec z)\prec x}=\xrightarrow{-(y\prec z)\prec x}.\]
In other words, $\op{Ass}_\bullet(\sP erm)=nil\sA ss\times nil\sA ss$. Further imposing $0=x\prec y-y\prec x=x\cdot y$, we find that $\op{Com}_\bullet(\sP erm)=nil\sA ss$. Finally, as in Remark \ref{rem:curlyeqprec} and Example \ref{ex:ass}, it is convenient in the compuation of $\op{preLie}_\bullet(\sP erm)$ to replace the product $\prec$ by the opposite one $\curlyeqsucc:=-\prec^{op}$, satisfying the formulas $\to_x\cdot'\xrightarrow{y}=\xrightarrow{x}\cdot'\,_y\to=:\xrightarrow{x\curlyeqsucc y}$. In fact, as we saw in \ref{ex:ass} the relations of $\op{preLie}_\bullet(\sP erm)$-algebra translate in the relations of $\op{preLie}_\bullet(\sA ss)=\sD end$-algebra for the products $\prec,\curlyeqsucc$, and we get the additional relations (where as usual $x\cdot y=x\prec y-y\prec x=x\prec y+ x\curlyeqsucc y$)
\[ 0= \xrightarrow{x}\cdot'\left( \to_y\cdot'\to_z\right)-\xrightarrow{x}\cdot'\left(\to_z\cdot'\to_y\right)=\xrightarrow{x\prec(y\cdot z) - x\prec(z\cdot y)}\]
\[ 0= \to_x\cdot'(\xrightarrow{y}\cdot'\to_z)-\to_x\cdot'\left( \to_z\cdot'\xrightarrow{y}\right)= \xrightarrow{x\curlyeqsucc(y\prec z)-x\curlyeqsucc(z\curlyeqsucc y)}.\]
This is in accord with \cite[Theorem 21]{V}.
\end{example}

\begin{definition} Given an operad $(\Oh,\cdot_i,\bullet_j,[-,-]_k)$, an $\Oh_{adm}$-algebra structure on $V$ is the datum of non-symmetric operations $\prec_i,\succ_i,\circ_j.\ast_k:V^{\ten 2}\to V$ such that the induced operations $x\cdot_i y:=x\prec_i y+x\succ_iy$, $x\bullet_j y= x\circ_j y+y\circ_j x$, $[x,y]_k=x\ast_k y-y\ast_k x$ define an $\Oh$-algebra structure on $(V,\cdot_i,\bullet_j,[-,-]_k)$. This defines the operad $(\Oh_{adm},\prec_i,\succ_i,\circ_j,\ast_k)$ of $\Oh$ admissible algebras.
\end{definition}

\begin{example}\label{ex:lieadm} We consider the operad $\sL ie_{adm}$ of Lie admissible algebras, that is, the operad generated by a non-symmetric operation $\cdot$ such that the commutator is a Lie bracket. We shall write the only relation in $\sL ie_{adm}$ as $0=R_\cdot(x_1,x_2,x_3)=\sum_{\sigma\in S_3}\varepsilon(\sigma)A_\cdot(x_{\sigma(1)},x_{\sigma(2)},x_{\sigma(3)})$, where $\varepsilon(\sigma)$ is the sign of the permutation $\sigma$ and as usual $A_\cdot(-,-,-)$ is the asociator of $\cdot$. Given an $\operatorname{Ass}_\bullet(\sL ie_{adm})$-algebra structure $(V,\prec,\succ)$ on $V$, together with the associated $\sL ie_{adm}$-algebra structure on $C^*(\Delta_1;V)$, the associator $A_{\cdot'}(-,-,-)$ is computed as in Example \ref{ex:ass}. By Lemma \ref{lem:rel} the vanishing of $R_\cdot'(-,-,-)$ is equivalent to the following relation on $\prec,\succ$ (the others follow from this one by symmetry of $R$) 
\begin{multline*} 0= R_{\cdot'}(_x\to,\to_y,\xrightarrow{z})=A_{\cdot'}(_x\to,\to_y,\xrightarrow{z})+ A_{\cdot'}(\to_y,\xrightarrow{z},\,_x\to)+A_{\cdot'}(\xrightarrow{z},\,_x\to,\to_y)-\\-A_{\cdot'}(\to_y,\,_x\to,\xrightarrow{z})-A_{\cdot'}(_x\to,\xrightarrow{z},\to_y)-A_{\cdot'}(\xrightarrow{z},\to_y,\,_x\to)=
\end{multline*} 
\[=\xrightarrow{x\prec(z\succ y) + x\succ(z\succ y)-(x\succ z)\succ y - (x\succ z)\prec y-(x\prec z)\succ y-(x\prec z)\prec y -x\prec(z\prec y)+x\succ(z\prec y)}\]
This is exactly the relation saying that the product $x\cup y:=x\prec y + x\succ y$ on $V$ (so denoted to distinguish it from the Lie admissible product $x\cdot y=x\prec y-y\succ x$) is associative. Hence, we found $\op{Ass}_\bullet(\sL ie_{adm})=\sA ss_{adm}$. Further imposing $x\prec y=y\succ x=:x\star y$, we see that $x\cup y= x\star y + y\star x$ is an associative and commutative product, and thus that $\op{Com}_\bullet(\sL ie_{adm})=\sC om_{adm}$. For the operad $(\operatorname{preLie}_\bullet(\sL ie_{adm}),\prec,\succ)$ we find the relation (and the others follow by symmetry of $R$), where as usual $x\cdot y=x\prec y-y\succ x$,
\begin{multline*} 0= R_{\cdot'}(\to_x,\to_y,\xrightarrow{z})=A_{\cdot'}(\to_x,\to_y,\xrightarrow{z})+ A_{\cdot'}(\to_y,\xrightarrow{z},\to_x)+A_{\cdot'}(\xrightarrow{z},\to_x,\to_y)-\\-A_{\cdot'}(\to_y,\to_x,\xrightarrow{z})-A_{\cdot'}(\to_x,\xrightarrow{z},\to_y)-A_{\cdot'}(\xrightarrow{z},\to_y,\to_x)= 
\end{multline*} 
\[=\xrightarrow{-z\succ(x\cdot y) - (z\succ y)\succ x - (z\succ y)\prec x + (z\prec x)\succ y + (z\prec x)\prec y-z\prec(x\cdot y)+z\succ (y\cdot x) + (z\succ x)\succ y + (z\succ x)\prec y -(z\prec y)\succ x -(z\prec y)\prec x+ z\prec(y\cdot x)}\]
Putting $x\cup y = x\prec y + x\succ y$, the reader will check that the above relation can be rewritten as $0=A_\cup(z,x,y)-A_\cup(z,y,x)$, and thus we found $\op{preLie}_\bullet(\sL ie_{adm})=pre\sL ie_{adm}$.\end{example} 

\begin{remark}
	The latter example suggests $\Oh\bullet\sL ie_{adm}=\Oh_{adm}$ for every operad $\Oh$ in $\mathbf{Op}$.	
\end{remark}

\begin{example}\label{ex:postlie} We consider the operad $(post\sL ie,\cdot,[-,-])$ of post-Lie algebras: this is the operad generated by a non-symmetric product $\cdot$ and a Lie bracket $[-,-]$ satisfying the relations $0=R_1(x,y,z)=[x,y]\cdot z-[x,y\cdot z]-[x\cdot z,y]$ and $0=R_2(x,y,z)=x\cdot(y\cdot z +z\cdot y + [z,y])-(x\cdot y)\cdot z+(x\cdot z)\cdot y$. In the operad $(\op{Ass}_\bullet(\Oh),\prec,\succ,\ast)$ we get the associativity relation for the product $\ast$ and the six independent relations 
\[ 0=R_1(_x\to,\xrightarrow{y},\to_z)=\xrightarrow{(x\ast y)\prec z-x\ast(y\prec z)}, \]
\[ 0=R_1(\to_x,\xrightarrow{y},_z\to)=\xrightarrow{z\succ(y\ast x)-(z\succ y)\ast x},\]
\[ 0=R_1(_x\to,\to_y,\xrightarrow{z})=\xrightarrow{x\ast(z\succ y)-(x\prec z)\ast y},\]
\[ 0=R_2(_x\to,\xrightarrow{y},\to_z)=\xrightarrow{x\prec(y\prec z+z\prec y+y\ast z)-(x\prec y)\prec z},\]
\[ 0=R_2(\to_x,\xrightarrow{y},_z\to)= \xrightarrow{(z\succ y+ y\prec z +z\star y)\succ x-z\succ(y\succ x)},\]
\[ 0=R_2( \xrightarrow{x},_y\to,\to_z)=\xrightarrow{(y\succ x)\prec z-y\succ(x\prec z)}.\]
These are the relations of dendriform trialgebra \cite{LR}, hence we found $\op{Ass}_\bullet(post\sL ie)=\mathcal{T}ridend$. If we further impose $x\prec y= y\succ x=: y\star x$, $x\star y=y\star x=:x\circledast y$, we find that the operad $(\op{Com}_\bullet(post\sL ie),\star,\circledast)$ is defined by the following relations
\[ (x\circledast y)\circledast z=x\circledast(y\circledast z),\quad(x\circledast y)\star z=x\circledast(y\star z),\quad(x\star y)\star z =x\star(y\star z+z\star y +y\circledast z). \]
We shall denote this operad by $post\sC om:=\op{Com}_\bullet(post\sL ie)$. We leave to the interested reader the computation of $\op{preLie}_\bullet(post\sL ie)$. We have morphism of operads  
\[ (\sL ie,\{-,-\})\to(post\sL ie,\cdot,[-,-]):\{-,-\}\:\:\longrightarrow\:\:\cdot\:-\:\cdot^{op}\:+\:[-,-],\]
\[ (\sA ss,\cup)\to(\mathcal{T}ridend,\prec,\succ,\ast):\cup\:\:\longrightarrow\:\:\prec+\succ+\ast,\]
\[(\sC om,\bullet)\to(post\sC om,\star,\circledast):\bullet\:\:\longrightarrow\:\: \star+\star^{op}+\circledast, \] 
in the first (as well as the second) case this is well known, the other two follow by functoriality. 
\end{example}

\begin{example} We consider the operad generated by a Lie bracket $[-,-]$ and a right Leibniz product $\cdot$ satisfying the additional relations 
	\[ [x,y]\cdot z=[x,y\cdot z]+[x\cdot z,y],\qquad x\cdot[y,z]=x\cdot(y\cdot z). \]
	It can be checked that this is the Koszul dual $(post\sC om^!, \cdot,[-,-])$ of the operad $post\sC om:=\op{Com}_\bullet(post\sL ie)$ from the previous example. To aid in the computations, we notice that this is the same, up to changing the sign of the operation $\cdot$, as the operad we obtain from $(post\sL ie,\cdot,[-,-])$ by further imposing the right Leibniz identity for $\cdot$. By the previous example and Example \ref{ex:leib}, we see that the operad $\op{Ass}_\bullet(post\sC om^!)$ is the same as the one we obtain from the operad $\mathcal{T}ridend$ of dendriform trialgebras by further imposing the diassociativity relations for $\prec,\succ$, and then changing the signs of $\prec,\succ$: the reader will readily verify that this is the operad $\op{Ass}_\bullet(post\sC om^!)=\mathcal{T}riass$ of triassociative algebras by Loday and Ronco \cite{LR}. Likewise, $\op{Com}_\bullet(post\sC om^!)$ is the operad we obtain from $(post\sC om,\star,\circledast)$, defined as in the previous example, by further imposing that $\star$ is a right permutative product, and then changing the sign of $\star$: the reader will readily verify that this is the same as the operad $\sC omtrias$ of commutative trialgebras \cite{Vpos}.   
\end{example}
\newcommand{\sR}{\mathcal{R}}

\section{A Koszul dual trick to compute Manin white products}

We denote by $-\circ-:\mathbf{Op}\times\mathbf{Op}\to\mathbf{Op}$ the Manin white product of operads \cite{V}, it gives $\mathbf{Op}$ a structure of symmetric monoidal category, and by $-^!:\mathbf{Op}\to\mathbf{Op}$ the Koszul duality functor. Recall that the Manin white and black products are Koszul dual, in the sense that $(\Oh\bullet\sP)^!=\Oh^!\circ\sP^!$ for every pair of operads $\Oh$ and $\sP$ in $\mathbf{Op}$, and moreover for every operad $\Oh$ the functors $\xymatrix{\Oh\bullet-:\mathbf{Op}\ar@<2pt>[r]& \mathbf{Op}:\Oh^!\circ-\ar@<2pt>[l]}$ form an adjoint pair. recall that $\sA ss^!=\sA ss$, $\sC om^!=\sL ie$ and $pre\sL ie^!=\sP erm$. In the previous section we introduced easily computable functors $\op{Ass}_\bullet(-)$, $\op{Com}_\bullet(-),\op{preLie}_\bullet(-):\mathbf{Op}\to\mathbf{Op}$ and claimed that they coincide with the respective Manin black products $\sA ss\bullet-,\sC om\bullet-, pre\sL ie\bullet-$. The aim of this section is to introduce the respective right adjoint functors $\op{Ass}_\circ(-),\op{Lie}_\circ(-),\op{Perm}_\circ(-):\mathbf{Op}\to\mathbf{Op}$ (the adjointness relation will be shown at the very end of the section) and prove that they in fact coincide with the Manin white products $\sA ss\circ-,\sL ie\circ-,\sP erm\circ-$: 
this will also complete the proof of Theorem \ref{th:black}.

We consider first the case of $\operatorname{Ass}_\circ(-)$. Given an (as usual, binary, quadratic and finitely generated) operad $(\Oh,\cdot_i,\bullet_j,[-,-]_k)$, the operad $\op{Ass}_\circ(\Oh)$ is generated by non symmetric operations $\prec_i,\succ_i,\circ_j,\ast_k$.  We want a morphism of operads $\mu_{\Oh}:\op{Ass}_\bullet(\op{Ass}_\circ(\Oh))\to\Oh$ corresponding to the counit of the adjunction  $\xymatrix{\op{Ass}_\bullet(-):\mathbf{Op}\ar@<2pt>[r]& \mathbf{Op}:\op{Ass}_\circ(-)\ar@<2pt>[l]}$: in other words, by definition of $\op{Ass}_\bullet(-)$, given an $\Oh$-algebra structure $(V,\cdot_i,\bullet_j,[-,-]_k)$ on a vector space $V$ we want an induced local dg $\op{Ass}_\circ(\Oh)$-algebra structure on the complex $C^*(\Delta_1;V)=C^*(\Delta_1;\mathbb{K})\otimes V$. Denoting by $\cup$ the usual cup product of cochains on $C^*(\Delta_1;\mathbb{K})$, the $\op{Ass}_\circ(\Oh)$-algebra structure on $C^*(\Delta_1;V)$ will be given by the tensor product operations $\prec_i=\cup\otimes\cdot_i$, $\succ_i=\cup\otimes\cdot_i^{op}$, $\circ_j=\cup\ten\bullet_j$, $\ast_k=\cup\ten[-,-]_k$: explicitly, with the same notations as in the previous section, 
\[ \to_x\prec_i\to_y\:=\:\to_{x\cdot_i y}\:=\:\to_y\succ_i\to_x,\qquad \to_x\circ_j\to_y\:=\:\to_{x\bullet_j y},\qquad\to_x\ast_k\to_y\:=\: \to_{[x,y]_k}, \] \[ _x\to\prec_i\,_y\to\:=\:_{x\cdot_i y}\to\:=\:_y\to\succ_i\,_x\to,\qquad _x\to\circ_j\,_y\to\:=\:_{x\bullet_j y}\to,\qquad_x\to\ast_k\,_y\to\:=\: _{[x,y]_k}\to, \] \[ _x\to\prec_i\xrightarrow{y}=_y\to\succ_i\xrightarrow{x}\:=\:\xrightarrow{x\cdot_i y}\:=\:\xrightarrow{x}\prec_i\to_y= \xrightarrow{y}\succ_i\to_{x},\] \[ _x\to\circ_j\xrightarrow{y}\:=\:\xrightarrow{x\bullet_j y}\:=\:\xrightarrow{x}\circ_j\to_y,\qquad  _x\to\ast_k\xrightarrow{y}\:=\:\xrightarrow{[x,y]_k}\:=\:\xrightarrow{x}\ast_k\to_y, \] 
and the remaining products vanish. It is immediately seen that these operations satisfy the Leibniz identity with respect to $d$ and the locality assumption from Definition \ref{def:ManinBlack}. We will generically call the operations $\prec_i,\succ_i,\circ_j,\ast_k$ on $C^*(\Delta_1;V)$ the cup products. 

\begin{definition} Given an operad $(\Oh,\cdot_i,\bullet_j,[-,-]_k)$ as usual, the operad $\op{Ass}_\circ(\Oh)$ is generated by non-symmetric operations $\prec_i,\succ_i,\circ_j,\ast_k$, together with the larger set of relations making $C^*(\Delta_1;V)$ with the cup products a local dg $\op{Ass}_\circ(\Oh)$-algebra for every $\Oh$-algebra $V$.\end{definition}

Next we want to describe a generating set of relations of $\op{Ass}_\circ(\Oh)$. Given a terniary operation $R(-,-,-):C^*(\Delta_1;V)^{\ten3}\to C^*(\Delta_1;V)$ in the cup products, we write $R=\sum_{\sigma\in S_3}R_\sigma$, where $S_3$ is the symmetric group and $R_\sigma$ is spanned by triple products permuting the variables according to $\sigma$. As in the proof of Lemma \ref{lem:rel}, we see that the relation $R(-,-,-)=0$ holds in $C^*(\Delta_1;V)$ if and only if
\begin{multline*}0=R(\xrightarrow{x},\to_y,\,_z\to)=R(\xrightarrow{x},\,_y\to,\to_z)=R(\to_x,\xrightarrow{y},\,_z\to)=\\=R(_x\to,\xrightarrow{y},\to_z)=R(\to_x,\,_y\to,\xrightarrow{z})=R(_x\to,\to_y,\xrightarrow{z})=0\end{multline*}
for all $x,y,z\in V$. We claim that this is true if and only if $R_\sigma(-,-,-)=0$ holds in $C^*(\Delta_1;V)$ for every $\sigma\in S_3$, where the \vr if'' implication is obvious. To fix the ideas, we consider the identical permutation $\id\in S_3$ and prove $R(-,-,-)=0\solose R_{\id}(-,-,-)=0$: the remaining implications are proved similarly. We obtain the desired conclusion again by Lemma \ref{lem:rel}, the point is that the only non-vanishing triple products that we can form out of the cochains $_1\to,\to_1,\xrightarrow{1}\in C^*(\Delta_1;\mathbb{K})$ are $(_1\to\cup\xrightarrow{1})\cup\to_1\:=\:\xrightarrow{1}\:=\:_1\to\cup(\xrightarrow{1}\cup\to_1)$: this shows at once that $R_{\id}(\xrightarrow{x},\to_y,\,_z\to)=R_{\id}(\xrightarrow{x},\,_y\to,\to_z)=R_{\id}(\to_x,\xrightarrow{y},\,_z\to)=R_{\id}(\to_x,\,_y\to,\xrightarrow{z})=R_{\id}(_x\to,\to_y,\xrightarrow{z})=0$, whereas $R_{\id}(_x\to,\xrightarrow{y},\to_z)=R(_x\to,\xrightarrow{y},\to_z)=\xrightarrow{R'(x,y,z)}=0$ by hypothesis, where $R'(-,-,-):V^{\ten3}\to V$ is a terniary operation in $\Oh$; finally, if this holds for a generic $\Oh$-algebra $V$, we conclude that $R'(-,-,-)=0$ is a relation in the operad $\Oh$. We sum up the previous discussion in the following lemma.
\begin{lemma}\label{lem:relwhite} The operad $\op{Ass}_\circ(\Oh)$ has a generating set of relations $R(-,-,-)=0$ which are non-symmetric (where a relation is non-symmetric if $R=R_{\id}$), consisting of all the possible splittings, as in the following 
	\[ R(_x\to,\xrightarrow{y},\to_z)=\xrightarrow{R'(x,y,z)}=0,\]
of a relation $R'(-,-,-)=0$ in $\Oh$ via the cup products on $C^*(\Delta_1;V)$ (where $V$ is a generic $\Oh$-algebra).\end{lemma}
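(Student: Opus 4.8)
The plan is to prove Lemma \ref{lem:relwhite} by showing that the (a priori huge) collection of defining relations of $\op{Ass}_\circ(\Oh)$ — all ternary cup-product operations $R$ vanishing identically on $C^*(\Delta_1;V)$ for every $\Oh$-algebra $V$ — is generated by the non-symmetric splittings of relations of $\Oh$. The first ingredient I would import wholesale is the reduction already used in (the proof of) Lemma \ref{lem:rel}: by degree and locality bookkeeping, a ternary operation $R(-,-,-)$ in the cup products vanishes on all of $C^*(\Delta_1;V)$ if and only if it vanishes on the six mixed evaluations $R(\xrightarrow{x},\to_y,{}_z\to)$, $R(\xrightarrow{x},{}_y\to,\to_z)$, $R(\to_x,\xrightarrow{y},{}_z\to)$, $R({}_x\to,\xrightarrow{y},\to_z)$, $R(\to_x,{}_y\to,\xrightarrow{z})$, $R({}_x\to,\to_y,\xrightarrow{z})$, for all $x,y,z\in V$.

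The heart of the argument is the decomposition $R=\sum_{\sigma\in S_3}R_\sigma$ by permutation type, together with the claim that $R=0$ holds in $C^*(\Delta_1;V)$ if and only if each summand $R_\sigma=0$ does; the ``if'' direction being trivial, I would focus on ``only if''. The decisive observation is the rigidity of the cup product on $C^*(\Delta_1;\mathbb{K})$: the only nonzero triple product of the basis cochains ${}_1\to,\to_1,\xrightarrow{1}$ is $({}_1\to\cup\xrightarrow{1})\cup\to_1=\xrightarrow{1}={}_1\to\cup(\xrightarrow{1}\cup\to_1)$, i.e. the one placing the left vertex, the edge, and the right vertex in exactly that order. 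Each of the six mixed inputs carries one edge cochain and the two distinct vertex cochains distributed among the three slots, and is carried into this unique productive pattern by exactly one permutation; this sets up a bijection between the six mixed evaluations and the six permutation types. Concretely, evaluating $R=\sum_\sigma R_\sigma$ at a given mixed input annihilates every summand except the one whose $\sigma$ matches the input, so for instance $R({}_x\to,\xrightarrow{y},\to_z)=R_{\id}({}_x\to,\xrightarrow{y},\to_z)$, and similarly for the remaining five.

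I would then treat $\id\in S_3$ as the representative case, the other five being verbatim analogues after relabeling the vertex roles and slot positions. Writing $R_{\id}({}_x\to,\xrightarrow{y},\to_z)=\xrightarrow{R'(x,y,z)}$, where $R'(-,-,-):V^{\ten3}\to V$ is the ternary $\Oh$-operation obtained by replacing each cup product by the corresponding $\cdot_i$, $\bullet_j$, $[-,-]_k$, the hypothesis $R=0$ forces $\xrightarrow{R'(x,y,z)}=0$ for all $x,y,z$ in a generic $\Oh$-algebra $V$, hence $R'(-,-,-)=0$ is a relation in $\Oh$. Running the bijection backwards, the five remaining mixed evaluations of $R_{\id}$ vanish automatically by the rigidity above, so by Lemma \ref{lem:rel} we obtain $R_{\id}=0$ on all of $C^*(\Delta_1;V)$; symmetrically each $R_\sigma=0$ is the splitting of an $\Oh$-relation. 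This simultaneously shows that the relations of $\op{Ass}_\circ(\Oh)$ are generated by the non-symmetric ones ($R=R_{\id}$) and identifies the latter as precisely the splittings of relations of $\Oh$, which is the assertion of the lemma.

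The step I expect to be the main obstacle is the decoupling $R=0\solose R_\sigma=0$, since a priori distinct permutation types could interfere in a vanishing sum. What makes it go through is exactly the one-dimensionality of $C^1(\Delta_1;\mathbb{K})$ and the single surviving triple product, which separate the six permutation types across the six mixed evaluations. I would therefore take particular care to verify that the assignment sending each mixed input pattern to its unique productive reordering permutation is genuinely a bijection onto $S_3$, since every subsequent identification of $R_\sigma$ with a splitting of an $\Oh$-relation rests on it.
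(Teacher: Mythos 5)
Your proposal is correct and follows essentially the same route as the paper: the reduction to the six mixed evaluations as in (the proof of) Lemma \ref{lem:rel}, the decomposition $R=\sum_{\sigma\in S_3}R_\sigma$, and the rigidity of the cup product on $C^*(\Delta_1;\mathbb{K})$ (the unique non-vanishing triple product $({}_1\to\cup\xrightarrow{1})\cup\to_1=\xrightarrow{1}={}_1\to\cup(\xrightarrow{1}\cup\to_1)$) forcing each mixed evaluation to see exactly one summand $R_\sigma$. Your explicit emphasis on the bijection between the six input patterns and the six permutations is precisely the point the paper makes implicitly, so nothing is missing.
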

It is best to illustrate the previous lemma by some examples.
\begin{example} We consider the operad $(\sC om,\bullet)$: in this case $R'$ as in the previous lemma has to be non-symmetric itself, and since the space of non-symmetric relators of $\sC om$ is one-dimensional, spanned by the associativity relation, we only get the splitting
	\[ (_x\to \circ \xrightarrow{y})\circ\to_y - _x\to\circ(\xrightarrow{y}\circ\to_z) =\xrightarrow{(x\bullet y)\bullet z-x\bullet (y\bullet z)}=0,\]
	telling us that the cup product $\circ$ is associative: thus $\op{Ass}_\circ(\sC om)=\sA ss$ as expected. In the case of the operad $(\sL ie,[-,-])$, again $R'$ as in the previous Lemma has to be non-symmetric, and since the space of relators of the operad $\sL ie$ is one-dimensional generated by the Jacobi identity, and this can't be written in a non-symmetric form, we find $\op{Ass}_\circ(\sL ie)=\sM ag_{1,0,0}$, in accord with our computation of the Koszul dual $\op{Ass}_\bullet(\sC om)=nil\sA ss$. 
	\end{example}

\begin{remark} Whereas the computation of the functor $\op{Ass}_\bullet(-)$ is completely mechanical, as we apply Lemma \ref{lem:rel} to a generating set of relations of $\Oh$ to get a generating set of relations of $\op{Ass}_\bullet(\Oh)$, the previous lemma is not as effective in the compution of $\op{Ass}_\circ(\Oh)$: we illustrate this fact by considering the operad $(\sP ois,\bullet,[-,-])$. By the previous example, the operad $\op{Ass}_\circ(\sP ois)$ is generated by an associative product $\circ$ and a magmatic product $\ast$. Since there is no way to split the Poisson identity $[x\bullet y, z] -x\bullet[y,z] - [x,z]\bullet y=0$ as in the previous lemma, as this can't be written in a non-symmetric form, we may be tempted to conclude that $\op{Ass}_\circ(\sP ois)=\sA ss+\sM ag_{1,0,0}$, but this is in contrast with our computation of the Koszul dual $\op{Ass}_\bullet(\sP ois)$ in Example \ref{ex:pois}. In fact, looking closely we find the splitting
\begin{multline*} (_x\to\circ \xrightarrow{y})\ast\to_z-_x\to\circ(\xrightarrow{y}\ast\to_z) + (_x\to\ast\xrightarrow{y})\circ\to_z-_x\to\ast(\xrightarrow{y}\circ\to_z) = \\ = \xrightarrow{[x\bullet y,z ]-x\bullet[y,z]+[x,y]\bullet z-[x,y\bullet z]}=\xrightarrow{[x,z]\bullet y - y\bullet[x,z]}=0.
\end{multline*}
So we find the relation $(\alpha\circ\beta)\ast\gamma-\alpha\circ(\beta\ast\gamma)+(\alpha\ast\beta)\circ\gamma-\alpha\ast(\beta\circ\gamma)=0$, $\forall \alpha,\beta,\gamma\in C^*(\Delta_1;V)$, in the operad $\op{Ass}_\circ(\sP ois)$. Together with the associativity relation for $\circ$, these generate the relations of $\op{Ass}_\circ(\sP ois)$: the easiest way to see this is to check that the operad $\op{Ass}_\circ(\sP ois)$ defined in this way is in fact the Koszul dual of $\op{Ass}_\bullet(\sP ois)$ from \ref{ex:pois}. Returning to the initial remark, the difficulty in applying Lemma \ref{lem:relwhite} is that we have to look for the splittings of any relation $R'$ in $\Oh$, and we can't limit ourselves to let $R'$ vary in a generating set of these relations, so there is no telling us in general if we are missing some of the possible splittings. As in the previous computation, the easiest way around this, and what we will do in practice in the following examples, is to take advantage of the computation of the Koszul dual $\op{Ass}_\bullet(\Oh^!)$ from the previous section. Of course, this depends on the yet to be given proofs of theorems \ref{th:black}, \ref{th:asswhite},\ref{th:permwhite} and \ref{th:liewhite}.
\end{remark}
\begin{example} We consider the operad $(\sA ss,\cdot)$: in this case we know from Example \ref{ex:ass} that $\op{Ass}_\circ(\sA ss)=(\sA ss\bullet\sA ss)^!=(\sA ss\times\sA ss)^!=\sA ss+\sA ss$ (with the functors $-\times-$ and $-+-$ as in Definition \ref{def:+x}), and in fact we find the relations
	\[ (_x\to\prec\xrightarrow{y})\prec\to_z-_x\to\prec(\xrightarrow{y}\prec \to_z)=\xrightarrow{(x\cdot y)\cdot z-x\cdot(y\cdot z)}=0,  \]
	\[ (_x\to\succ\xrightarrow{y})\succ\to_z-_x\to\succ(\xrightarrow{y}\succ \to_z)=\xrightarrow{z\cdot(y\cdot x)-(z\cdot y)\cdot x}=0,  \] 
	in the operad $(\op{Ass}_\circ(\sA ss),\prec,\succ)$.
	
	We consider the operad $(pre\sL ie, \cdot)$. The right pre-Lie relation $(x\cdot y)\cdot z-x\cdot(y\cdot z)-(x\cdot z)\cdot y+x\cdot(z\cdot y)=0$ can't be splitted as in Lemma \ref{lem:relwhite}, for instance because there is no argument remaining inside every parenthesis. As the previous remark illustrates, this alone wouldn't be enough to conclude $\op{Ass}_\circ(pre\sL ie)=\sM ag_{2,0,0}$: on the other hand, this is true since it agrees with the computation of the Koszul dual $\op{Ass}_\bullet(\sP erm)=nil\sA ss\times nil\sA ss$ in Example \ref{ex:perm}.
	
	We consider the operad $(\sL eib,\cdot)$. Again, the right Leibniz relation  $(x\cdot y)\cdot z-x\cdot(y\cdot z)-(x\cdot z)\cdot y=0$ can't be splitted as in Lemma \ref{lem:relwhite}, on the other hand, this imply the relation $x\cdot(y\cdot z+z\cdot y) = (x\cdot y)\cdot z-(x\cdot z)\cdot y +(x\cdot z)\cdot y-(x\cdot y)\cdot z=0$ in $\sL eib$: for the latter, we find the splittings
	\[ _x\to\prec (\xrightarrow{y}\prec\to_z + \xrightarrow{y}\succ\to_z ) =\xrightarrow{x\cdot(y\cdot z+z\cdot y)}=0,\]
	\[ (_x\to\prec\xrightarrow{y} + \,_x\to\succ\xrightarrow{y})\succ\to_z = \xrightarrow{z\cdot (x\cdot y + y\cdot x)}=0, \]
	hence the relations $\alpha\prec(\beta\prec\gamma+\beta\succ\gamma)=0=(\alpha\prec\beta +\alpha\succ\beta)\succ\gamma$ in $(\op{Ass}_\circ(\sL eib),\prec,\succ)$. We leave to the reader to check that this is a generating set of relations, for instance by computing the Koszul dual $\op{Ass}_\bullet(\sZ inb)$ with the method of the previous section.

    As a final example, we consider $(\sZ inb,\cdot)$. In this case, we know form Example \ref{ex:leib} that $\op{Ass}_\circ(\sZ inb)=(\sA ss\bullet\sL eib)^!=di\sA ss^!=\sD end$: in fact, we get the dendriform relations on the operad $(\op{Ass}_\circ(\sZ inb),\prec,\succ)$, corresponding to the splittings as in Lemma \ref{lem:relwhite} (notice that the relation $(x\cdot y)\cdot z-(x\cdot z)\cdot y=0$ holds in $\sZ inb$)
    \[ _x\to\prec( \xrightarrow{y}\prec\to_z+\xrightarrow{y}\succ\to_z)-(_x\to\prec\xrightarrow{y})\prec\to_z=\xrightarrow{x\cdot(y\cdot z+ z\cdot y)-(x\cdot y)\cdot z}=0\]
    \[ (_x\to\prec\xrightarrow{y}+\,_x\to\succ\xrightarrow{y})\succ\to_z -\,_x\to\succ(\xrightarrow{y}\succ\to_z)=\xrightarrow{z\cdot(x\cdot y+y\cdot x)-(z\cdot y)\cdot x}=0\]
    \[ (_x\to\succ\xrightarrow{y})\prec\to_z -\, _x\to\succ(\xrightarrow{y}\prec\to_z)=\xrightarrow{(y\cdot x)\cdot z-(y\cdot z)\cdot x}=0.\]
\end{example}

\begin{theorem}\label{th:asswhite} There is a natural isomorphism $\op{Ass}_\circ(-)\xrightarrow{\cong}\sA ss\circ-$ of functors $\mathbf{Op}\to\mathbf{Op}$.
\end{theorem}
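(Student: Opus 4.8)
The plan is to prove $\op{Ass}_\circ(\Oh)=\sA ss\circ\Oh$ by exhibiting them as quadratic operads on the \emph{same} generators and with the \emph{same} space of relations, the identification being natural in $\Oh$. Both operads are generated by the operations $\prec_i,\succ_i,\circ_j,\ast_k$, realized respectively by the tensor products $\cup\ten\cdot_i$, $\cup\ten\cdot_i^{op}$, $\cup\ten\bullet_j$, $\cup\ten[-,-]_k$, so it is enough to compare their relators inside the free operad on these generators. Naturality will be automatic, since the tensor product $A\ten V$, the complex $C^*(\Delta_1;V)=C^*(\Delta_1;\mathbb{K})\ten V$, and the cup products are all functorial in the $\Oh$-algebra $V$.

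First I would write down the comparison morphism. By definition the relators of $\sA ss\circ\Oh$ are the arity $3$ relations holding on every tensor product $A\ten V$ of a generic associative algebra $A$ and a generic $\Oh$-algebra $V$, while the relators of $\op{Ass}_\circ(\Oh)$ are those holding on every $C^*(\Delta_1;V)$. Since $C^*(\Delta_1;V)=C^*(\Delta_1;\mathbb{K})\ten V$ is itself such a tensor product (with $A=C^*(\Delta_1;\mathbb{K})$), every relator of $\sA ss\circ\Oh$ is a relator of $\op{Ass}_\circ(\Oh)$; this yields a natural surjection $\sA ss\circ\Oh\twoheadrightarrow\op{Ass}_\circ(\Oh)$ that is the identity on generators.

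The main step is the reverse inclusion of relators. For this I would invoke the generating set of relations of $\op{Ass}_\circ(\Oh)$ furnished by Lemma \ref{lem:relwhite}: each generator is a non-symmetric relator $R=R_{\id}$ obtained as a splitting $R(_x\to,\xrightarrow{y},\to_z)=\xrightarrow{R'(x,y,z)}$ of some relation $R'=0$ in $\Oh$. It suffices to check that every such $R$ already holds on an arbitrary $A\ten V$; for then $R$ lies among the relators of $\sA ss\circ\Oh$, and since these $R$ generate the relations of $\op{Ass}_\circ(\Oh)$ while the relators of $\sA ss\circ\Oh$ are stable under the $S_3$-action, the two relation spaces coincide. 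The verification rests on an associativity collapse: each binary generator acts on $A\ten V$ by $\cup$ on the $A$-factor (crucially, both $\prec_i$ and $\succ_i$ use $\cup$, the opposite being taken only on the $V$-factor), so for a tree monomial $T$ occurring in $R=R_{\id}$ the three inputs remain in the order $a,b,c$ and, $\cup$ being associative, both bracketings give $T(a\ten x,b\ten y,c\ten z)=(a\cup b\cup c)\ten T^V(x,y,z)$ with $T^V$ the corresponding $\Oh$-tree monomial. Summing over the monomials of $R$ I then get $R(a\ten x,b\ten y,c\ten z)=(a\cup b\cup c)\ten R'(x,y,z)=0$, since $R'=0$ in $\Oh$.

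The one point requiring care is that the $\Oh$-operation $R'$ read off from the splitting is literally the $V$-side produced by the collapse on a generic $A\ten V$. This is where I would use that every non-symmetric tree monomial survives both evaluations with the same $V$-side: on $C^*(\Delta_1;\mathbb{K})$ the only nonvanishing triple cup products are $(_1\to\cup\xrightarrow{1})\cup\to_1=\,_1\to\cup(\xrightarrow{1}\cup\to_1)=\xrightarrow{1}$, so every monomial of $R_{\id}$ contributes with $A$-side $\xrightarrow{1}$ and $V$-side $T^V$, exactly matching the associative word $a\cup b\cup c$ in the generic computation; hence the two $V$-side sums agree and both equal $R'$. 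Once the relation spaces are matched, the surjection above is an isomorphism, natural in $\Oh$, which is the assertion.
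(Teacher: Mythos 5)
Your proof is correct and follows essentially the same route as the paper's: one inclusion of relators comes from viewing $C^*(\Delta_1;V)=C^*(\Delta_1;\mathbb{K})\otimes V$ as a special instance of $A\otimes V$, and the converse comes from evaluating the generating relations of Lemma \ref{lem:relwhite} on a generic $A\otimes V$ via the associativity collapse $R(a\otimes x,b\otimes y,c\otimes z)=(a\cup b\cup c)\otimes R'(x,y,z)=0$. The only differences are presentational (you phrase the conclusion as an equality of relation spaces rather than as a pair of mutually inverse morphisms that are the identity on generators, and you spell out the step the paper dismisses as ``clear''), so nothing further is needed.
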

\begin{proof} We denote by $(\sA ss,\cup)$ the associative operad with its generating product. Given an operad $(\Oh,\cdot_i,\bullet_j,[-,-]_k)$ as usual, the operad $\sA ss\circ\Oh$ is generated by the tensor product operations (which are non-symmetric) $\cup\otimes\cdot_i, \cup\otimes\cdot_i^{op}$, $\cup\otimes\bullet_j$, $\cup\otimes[-,-]_k$, together with the larger set of relations holding in the tensor product $A\otimes V$ of a generic $\sA ss$-algebra $(A,\cup)$ and a generic $\Oh$-algebra $(V,\cdot_i,\bullet_j,[-,-]_k)$. The isomorphism of operads $\op{Ass}_\circ(\Oh)\leftrightarrow \sA ss\circ\Oh$ is given by $\prec_i\leftrightarrow \cup\otimes\cdot_i$, $\succ_i\leftrightarrow\cup\otimes\cdot_i^{op}$, $\circ_j\leftrightarrow\cup\otimes\bullet_j$ and finally $\ast_k\leftrightarrow\cup\otimes[-,-]_k$. This defines a morphism of operads $\sA ss\circ\Oh\to\op{Ass}_\circ(\Oh)$: in fact, by construction of the cup products $\prec_i,\succ_i,\circ_j,\ast_k$, these satisfy the relations of $\sA ss\circ\Oh$-algebra in the tensor product $C^*(\Delta_1;V)$ of the $\sA ss$-algebra $(C^*(\Delta_1;\mathbb{K}),\cup)$ (where we may forget the gradation) and the generic $\Oh$-algebra $V$. Conversely, we must show that if we evaluate a generating relation $R(-,-,-)=0$ of $\op{Ass}_\circ(\Oh)$, given as in Lemma \ref{lem:relwhite}, in the operations $\cup\otimes\cdot_i, \cup\otimes\cdot_i^{op}$, $\cup\otimes\bullet_j$, $\cup\otimes[-,-]_k$, we get a relation of the operad $\sA ss\circ\Oh$: but this is also clear, since given a generic $\sA ss$-algebra $(A,\cup)$ we find $R(a\otimes x,b\otimes y,c\otimes z)= (a\cup b\cup c)\otimes R'(x,y,z)=0$, $\forall a\otimes x,b\otimes y,c\otimes z\in A\otimes V$, where $a\cup b\cup c:=(a \cup b)\cup c=a\cup(b\cup c)$.
\end{proof}

The construction of the functor $\op{Perm}_\circ(-):\mathbf{Op}\to\mathbf{Op}$ is similar. Given a generic $\Oh$-algebra $V$, the cup products on the complex $C^*(\Delta_1,v_l;V)$ are defined by the same formulas as in the associative case.
\begin{definition} Given an operad $(\Oh,\cdot_i,\bullet_j,[-,-]_k)$ as usual, the operad $\op{Perm}_\circ(\Oh)$ is generated by non-symmetric operations $\prec_i,\succ_i,\circ_j,\ast_k$, together with the larger set of relations making $C^*(\Delta_1,v_l;V)$ with the cup products a local dg $\op{Perm}_\circ(\Oh)$-algebra for every $\Oh$-algebra $V$.\end{definition}

We want to describe a generating set of relations of $\op{Perm}_\circ(\Oh)$ as in the associative case. We denote the elements of the symmetric group $S_3=\{\id,(123),(132),(12),(13),(23)\}$ according to their cycle decomposition. Given a terniary operation $R(-,-,-):C^*(\Delta_1,v_l;V)^{\otimes 3}\to C^*(\Delta_1,v_l;V)$ in the cup products, we write $R=R_1+R_2+R_3$, where $R_1:=R_{\id}+R_{(23)}$, $R_2:=R_{(123)}+ R_{(12)}$ and $R_3:=R_{(132)}+R_{(13)}$. By Lemma \ref{lem:relpl}, the relation $R(-,-,-)=0$ holds in $C^*(\Delta_1,v_l;V)$ if and only if 
\[ R(\xrightarrow{x},\to_y,\to_z)=R(\to_x,\xrightarrow{y},\to_z)=R(\to_x,\to_y,\xrightarrow{z})=0\]
for all $x,y,z\in V$. We claim that this is true if and only if so is $R_i(-,-,-)=0$ for $i=1,2,3$: again, we limit ourselves to show $R(-,-,-)=0\solose R_1(-,-,-)=0$. This time, the point is that the only way to form a non-vanishing triple product out of $\xrightarrow{1},\to_1,\to_1\in (C^*(\Delta_1,v_l;\mathbb{K}),\cup)$ is by putting the degree one cochain $\xrightarrow{1}$ on the left: this implies $R_1(\to_x,\xrightarrow{y},\to_z)=R_1(\to_x,\to_y,\xrightarrow{z})=0$, whereas $R_1(\xrightarrow{x},\to_y,\to_z)=R(\xrightarrow{x},\to_y,\to_z)=\xrightarrow{R'(x,y,z)}=0$ by hypothesis, where $R'(-,-,-)=0$ is a relation of $\Oh$ since $V$ is generic, so we may conclude again by Lemma \ref{lem:relpl}. To sum up
\begin{lemma}\label{lem:relpm} The operad $\op{Perm}_\circ(\Oh)$ has a generating set of relations $R(-,-,-)=0$ of the form $R=R_{\id}+R_{(23)}$, consisting of all the possible splittings, as in the following 
	\[ R(\xrightarrow{x},\to_y,\to_z)=\xrightarrow{R'(x,y,z)}=0,\]
	of a relation $R'(-,-,-)=0$ in $\Oh$ via the cup products on $C^*(\Delta_1,v_l;V)$.\end{lemma}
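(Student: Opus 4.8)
The plan is to run, mutatis mutandis, the argument that proves Lemma~\ref{lem:relwhite} in the associative case, replacing the full complex by the relative one $C^*(\Delta_1,v_l;V)$ and Lemma~\ref{lem:rel} by Lemma~\ref{lem:relpl}. By definition the relations of $\op{Perm}_\circ(\Oh)$ are exactly the ternary identities $R(-,-,-)=0$ among the cup products that hold on $C^*(\Delta_1,v_l;V)$ for a generic $\Oh$-algebra $V$, and by Lemma~\ref{lem:relpl} such an identity holds if and only if its three evaluations $R(\xrightarrow{x},\to_y,\to_z)$, $R(\to_x,\xrightarrow{y},\to_z)$, $R(\to_x,\to_y,\xrightarrow{z})$ vanish for all $x,y,z\in V$. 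First I would record the combinatorial fact already isolated before the statement: in $(C^*(\Delta_1,v_l;\mathbb{K}),\cup)$ a triple product of $\xrightarrow{1},\to_1,\to_1$ is nonzero precisely when the degree-one cochain $\xrightarrow{1}$ occupies the leftmost slot, in which case both bracketings return $\xrightarrow{1}$.

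Next I would split $R=R_1+R_2+R_3$ as in the statement, grouping the monomials of $R$ according to which of the three inputs appears leftmost, so that $R_1=R_{\id}+R_{(23)}$ keeps the first input on the left, $R_2=R_{(123)}+R_{(12)}$ the second, and $R_3=R_{(132)}+R_{(13)}$ the third. Evaluating on the three distinguished configurations and using the combinatorial fact makes the pieces decouple: in $R(\xrightarrow{x},\to_y,\to_z)$ only the degree-one argument $\xrightarrow{x}$ can be leftmost, so every monomial of $R_2$ and $R_3$ (whose leftmost input is the degree-zero $\to_y$ or $\to_z$) vanishes, and $R(\xrightarrow{x},\to_y,\to_z)=R_1(\xrightarrow{x},\to_y,\to_z)=\xrightarrow{R'(x,y,z)}$ for a ternary $\Oh$-operation $R'$; symmetrically $R(\to_x,\xrightarrow{y},\to_z)=R_2(\to_x,\xrightarrow{y},\to_z)$ and $R(\to_x,\to_y,\xrightarrow{z})=R_3(\to_x,\to_y,\xrightarrow{z})$. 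Moreover the two remaining evaluations of $R_1$, namely $R_1(\to_x,\xrightarrow{y},\to_z)$ and $R_1(\to_x,\to_y,\xrightarrow{z})$, vanish automatically, since the degree-one cochain is never leftmost in a monomial of $R_1$.

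From here the conclusion is formal. If $R=0$ holds on $C^*(\Delta_1,v_l;V)$ then by Lemma~\ref{lem:relpl} all three evaluations vanish, hence $R_1(\xrightarrow{x},\to_y,\to_z)=0$; together with the automatic vanishing of its other two evaluations, Lemma~\ref{lem:relpl} applied to $R_1$ gives $R_1=0$, and the same runs for $R_2$ and $R_3$. Thus every relation of $\op{Perm}_\circ(\Oh)$ is a sum of three relations, each of which, after a relabeling of its inputs, has the claimed form $R=R_{\id}+R_{(23)}$. Finally, since $V$ is generic, the identity $\xrightarrow{R'(x,y,z)}=R_1(\xrightarrow{x},\to_y,\to_z)=0$ upgrades to the statement that $R'(-,-,-)=0$ is a relation of $\Oh$, and conversely each splitting of a relation of $\Oh$ produces such an $R_1$; letting $R'$ range over all relations of $\Oh$ exhibits the asserted generating set. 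I expect the only genuinely delicate step to be the bookkeeping that makes the groups $R_1,R_2,R_3$ decouple under the evaluations --- in particular matching the cycle-type grouping to the \emph{leftmost-variable} condition correctly --- whereas the verification of the cup-product fact and the genericity argument are routine and already parallel the associative case.
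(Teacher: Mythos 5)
Your proposal is correct and is essentially the paper's own argument: the same decomposition $R=R_1+R_2+R_3$ by leftmost variable (with $R_1=R_{\id}+R_{(23)}$, etc.), the same key observation that a triple cup product on $C^*(\Delta_1,v_l;\mathbb{K})$ is nonzero only when $\xrightarrow{1}$ sits in the leftmost slot, and the same appeal to Lemma~\ref{lem:relpl} plus genericity of $V$ to decouple the pieces and identify each $R_i$ with a splitting of a relation of $\Oh$. Nothing further is needed.
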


\begin{theorem}\label{th:permwhite} There is a natural isomorphism $\op{Perm}_\circ(-)\xrightarrow{\cong}\sP erm\circ-$ of functors $\mathbf{Op}\to\mathbf{Op}$.
\end{theorem}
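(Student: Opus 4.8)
The plan is to mirror the proof of Theorem \ref{th:asswhite} almost verbatim, replacing the associative operad $(\sA ss,\cup)$ by the permutative operad $(\sP erm,\cup)$ and the model algebra $(C^*(\Delta_1;\K),\cup)$ by $(C^*(\Delta_1,v_l;\K),\cup)$. Recall that $\sP erm\circ\Oh$ is generated by the (non-symmetric) tensor product operations $\cup\ten\cdot_i$, $\cup\ten\cdot_i^{op}$, $\cup\ten\bullet_j$, $\cup\ten[-,-]_k$, subject to all relations holding in the tensor product $A\ten V$ of a generic $\sP erm$-algebra $(A,\cup)$ and a generic $\Oh$-algebra $V$. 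On generators I would set up the bijection $\prec_i\leftrightarrow\cup\ten\cdot_i$, $\succ_i\leftrightarrow\cup\ten\cdot_i^{op}$, $\circ_j\leftrightarrow\cup\ten\bullet_j$, $\ast_k\leftrightarrow\cup\ten[-,-]_k$ and argue that it extends to an isomorphism of operads by showing that each of the two induced maps on relations is a morphism.

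For the morphism $\sP erm\circ\Oh\to\op{Perm}_\circ(\Oh)$, the point is that $(C^*(\Delta_1,v_l;\K),\cup)$ is, after forgetting the grading, a $\sP erm$-algebra, so that $C^*(\Delta_1,v_l;V)=C^*(\Delta_1,v_l;\K)\ten V$ is literally the tensor product of this $\sP erm$-algebra with the generic $\Oh$-algebra $V$. Since the cup products on $C^*(\Delta_1,v_l;V)$ are by construction exactly the tensor product operations $\cup\ten\cdot_i,\ldots$, they automatically satisfy every relation defining a $\sP erm\circ\Oh$-algebra.

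For the converse I would take a generating relation $R(-,-,-)=0$ of $\op{Perm}_\circ(\Oh)$ of the form $R=R_{\id}+R_{(23)}$ furnished by Lemma \ref{lem:relpm}, so that $R(\xrightarrow{x},\to_y,\to_z)=\xrightarrow{R'(x,y,z)}$ with $R'(-,-,-)=0$ a relation of $\Oh$, and evaluate it in the tensor operations on $A\ten V$ for a generic $\sP erm$-algebra $A$. Because $R$ contains only the arrangements $\id$ and $(23)$, every summand places the first variable $a\ten x$ in the leftmost slot; the $R_{\id}$-summands contribute the $A$-factor $a\cup b\cup c$ and the $R_{(23)}$-summands the $A$-factor $a\cup c\cup b$. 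The key observation is that in a (right) permutative algebra these two factors agree, $a\cup b\cup c=a\cup(b\cup c)=a\cup(c\cup b)=a\cup c\cup b$, so both families of summands carry the common factor $a\cup b\cup c$, which I may pull out to obtain $R(a\ten x,b\ten y,c\ten z)=(a\cup b\cup c)\ten R'(x,y,z)=0$. Hence $R=0$ holds in $\sP erm\circ\Oh$, which finishes the argument.

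The main obstacle, and the only genuine difference from the associative case, is precisely this bookkeeping with the transposition $(23)$. In Theorem \ref{th:asswhite} the generating relations were fully non-symmetric ($R=R_{\id}$) and the single $A$-factor $a\cup b\cup c$ appeared tautologically from associativity, whereas here I must check that the extra $R_{(23)}$ term produced by Lemma \ref{lem:relpm} does not obstruct factoring out a common $A$-factor. The right-permutative identity $a\cup(b\cup c)=a\cup(c\cup b)$ is exactly what makes this work, matching the fact --- already exploited in the proof of Lemma \ref{lem:relpm} --- that the only non-vanishing triple products of $\xrightarrow{1},\to_1,\to_1$ in $C^*(\Delta_1,v_l;\K)$ have $\xrightarrow{1}$ leftmost and are insensitive to the order of the two degree-zero factors.
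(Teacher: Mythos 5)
Your proof is correct and takes essentially the same route as the paper: both directions are set up exactly as in Theorem \ref{th:asswhite}, using that $(C^*(\Delta_1,v_l;\mathbb{K}),\cup)$ is (after forgetting the grading) a right permutative algebra, so that the cup products on $C^*(\Delta_1,v_l;V)$ are literally the tensor product operations. The paper dismisses the converse verification as ``straightforward''; your explicit factoring of the common factor $a\cup b\cup c$ from the $R_{\id}$- and $R_{(23)}$-summands via the right-permutative identity $a\cup(b\cup c)=a\cup(c\cup b)$ is precisely that check, carried out in detail.
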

\begin{proof}
	This is done as in Theorem \ref{th:asswhite} by noticing that the relations of $\sP erm\circ\Oh$-algebra are satisfied by the cup products on the tensor product $C^*(\Delta_1,v_l;V)$ of the right permutative algebra $(C^*(\Delta_1,v_l;\mathbb{K}),\cup)$ and a generic $\Oh$-algebra $V$. Conversely, given a generating relation $R(-,-,-)=0$ of $\op{Perm}_\circ(\Oh)$ as in the previous lemma, it is straightforward to check that this holds more in general in the tensor product $A\otimes V$ of a generic $\sP erm$-algebra $A$ and a generic $\Oh$-algebra $V$. 
\end{proof}
\begin{example} We consider the operad $(\sL ie,[-,-])$: by the previous theorem $\op{Perm}_\circ(\sL ie)=\sP erm\circ\sL ie=\sL eib$, where the right Leibniz relation corresponds to the splitting as in Lemma \ref{lem:relpm}
	\[ (\xrightarrow{x}\ast \to_y)\ast\to_z-\xrightarrow{x}\ast(\to_y\ast\to_z)-(\xrightarrow{x}\ast\to_z)\ast\to_y=\xrightarrow{[[x,y],z]-[x,[y,z]]-[[x,z],y]}=0.\]

We consider the operad $(\sP ois,\bullet,[-,-])$. This is generated by a right permutative product $\circ$ and a right Leibniz product $\ast$: we have the following splittings of the Poisson identity
\begin{multline*}
0=\xrightarrow{[x\bullet y,z]-x\bullet [y,z]-[x,z]\bullet y} = \left( \xrightarrow{x}\circ\to_y\right)\ast\to_z -\xrightarrow{x}\circ(\to_y\ast\to_z)-\left( \xrightarrow{x}\ast\to_z\right)\circ\to{y}=\\= \left( \xrightarrow{x}\circ\to_y\right)\ast\to_z +\xrightarrow{x}\circ(\to_z\ast\to_y)-\left( \xrightarrow{x}\ast\to_z\right)\circ\to{y}, \end{multline*}
\begin{multline*} 0=\xrightarrow{[x,y\bullet z]-[x,y]\bullet z-[x,z]\bullet y}= \xrightarrow{x}\ast\left( \to_y\circ\to_z\right) -\left(\xrightarrow{x}\ast\to_y\right)\circ\to_z-\left(\xrightarrow{x}\ast\to_z\right)\circ\to_y =\\ = \xrightarrow{x}\ast\left( \to_z\circ\to_y\right) -\left(\xrightarrow{x}\ast\to_y\right)\circ\to_z-\left(\xrightarrow{x}\ast\to_z\right)\circ\to_y,\end{multline*}
corresponding to the three independent relations
\[ (\alpha\circ \beta)\ast\gamma=\alpha\circ(\beta\ast\gamma)+(\alpha\ast\gamma)\circ\beta,\quad\alpha\ast(\beta\circ\gamma)=(\alpha\ast\beta)\circ\gamma+(\alpha\ast\gamma)\circ\beta,\quad \alpha\circ(\beta\ast\gamma)=-\alpha\circ(\gamma\ast\beta), \]
in the operad $\op{Perm}_\circ(\sP ois)$. In fact, these are the relations defining Aguiar's operad $\sP erm\circ\sP ois=(pre\sL ie\bullet\sP ois)^!$ of dual pre-Poisson algebras \cite{A,U}.

We consider the operad $(\sA ss,\cdot)$: then we know $\sP erm\circ\sA ss=di\sA ss$. In this case, the relations we find as in Lemma \ref{lem:relpm} turn out to be the diassociative relations for the generating products $\prec$, $\curlyeqsucc:=\succ^{op}$ of $\op{Ass}_\circ(\Oh)$: in fact, we find
\begin{equation*} 0=\xrightarrow{(x\cdot y)\cdot z-x\cdot(y\cdot z)}=(\xrightarrow{x}\prec\to_y)\prec\to_z -\xrightarrow{x}\prec(\to_y\prec\to_z)=(\xrightarrow{x}\prec\to_y)\prec\to_z -\xrightarrow{x}\prec(\to_z\succ\to_y), \end{equation*}
\begin{equation*} 0=\xrightarrow{z\cdot(y\cdot x)-(z\cdot y)\cdot x}=(\xrightarrow{x}\succ\to_y)\succ\to_z -\xrightarrow{x}\succ(\to_y\succ\to_z)=\\=(\xrightarrow{x}\succ\to_y)\succ\to_z -\xrightarrow{x}\succ(\to_z\prec\to_y), \end{equation*}
\[ 0=\xrightarrow{(y\cdot x)\cdot z-y\cdot(x\cdot z)} =(\xrightarrow{x}\succ\to_y)\prec\to_x-(\xrightarrow{x}\prec\to_z)\succ\to_y.\]
The reader may compare this with the computation of $\op{preLie}_\bullet(\sA ss)$ in Example \ref{ex:ass}, as well as what we said in Remark \ref{rem:curlyeqprec}.

We consider the operad $(pre\sL ie,\cdot)$. We split the right pre-Lie identity as 
\[0=\xrightarrow{(x\cdot y)\cdot z-x\cdot(y\cdot z)-(x\cdot z)\cdot y +x\cdot(z\cdot y)}= (\xrightarrow{x}\prec\to_y)\prec\to_z-\xrightarrow{x}\prec\to_{y\cdot z}-(\xrightarrow{x}\prec\to_z)\prec\to_y+\xrightarrow{x}\prec\to_{z\cdot y},\]
Since we may split $\to_{y\cdot z}$ both as $\to_y\prec\to_z=\to_{y\cdot z}=\to_z\succ\to_y$, and similarly for $\to_{z\cdot y}$, we get the two independent relations  $(\alpha\prec\beta)\prec\gamma-\alpha\prec(\beta\prec\gamma)=(\alpha\prec\gamma)\prec\beta-\alpha\prec(\gamma\prec\beta)$ and $\alpha\prec(\beta\prec\gamma)=\alpha\prec(\gamma\succ\beta)$ in the operad $\op{Perm}_\circ(pre\sL ie)$. We also have the splitting
\[0=\xrightarrow{z\cdot (y\cdot x)-(z\cdot y)\cdot x-z\cdot(x\cdot y) +(z\cdot x)\cdot y}= (\xrightarrow{x}\succ\to_y)\succ\to_z-\xrightarrow{x}\succ\to_{z\cdot y}-(\xrightarrow{x}\prec\to_y)\succ\to_z+(\xrightarrow{x}\succ\to_{z})\prec\to_y,\]
giving the relations $(\alpha\succ\beta)\succ\gamma-\alpha\succ(\beta\succ\gamma)=(\alpha\prec\beta)\succ\gamma-(\alpha\succ\gamma)\prec\beta$ and $\alpha\succ(\beta\succ\gamma)=\alpha\succ(\gamma\prec\beta)$. We leave to the reader to check that this is a generating set of relations, and in fact the operad $(\op{Perm}_\circ(pre\sL ie),\prec,\curlyeqsucc:=\succ^{op})$ with the relations
\begin{eqnarray} \nonumber
(\alpha\prec\beta)\prec\gamma-\alpha\prec(\beta\prec\gamma)\:=\:(\alpha\prec\gamma)\prec\beta-\alpha\prec(\gamma\prec\beta),  & \alpha\prec(\beta\prec\gamma)\:=\:\alpha\prec(\beta\curlyeqsucc\gamma),  \\ \nonumber (\alpha\curlyeqsucc\beta)\curlyeqsucc\gamma-\alpha\curlyeqsucc(\beta\curlyeqsucc\gamma)\:=\:(\alpha\curlyeqsucc\gamma)\prec\beta-\alpha\curlyeqsucc(\gamma\prec\beta), & (\alpha\curlyeqsucc\beta)\curlyeqsucc\gamma \:=\:(\alpha\prec\beta)\curlyeqsucc\gamma,
\end{eqnarray}
is the Koszul dual of the operad $\op{preLie}_\bullet(\sP erm)$ from Example \ref{ex:perm}.
\end{example}

We finally come to the definition of $\op{Lie}_\circ(-):\mathbf{Op}\to\mathbf{Op}$. In this case, we consider the complex $C^*(\Delta_1;\mathbb{K})$ as a $\sL ie$-algebra via the bracket $[-,-]=\cup-\cup^{op}$, then given a generic $\Oh$-algebra $(V,\cdot_i,\bullet_j,\ast_k)$ we shall call the tensor product operations $\star_i := [-,-]\otimes\cdot_i=(\cup-\cup^{op})\otimes\cdot_i=\prec_i-\succ_i^{op}$, $\{-,-\}_j:=[-,-]\otimes\bullet_j=\circ_j-\circ_j^{op}$, $\circledast_k:=[-,-]\otimes[-,-]_k=\ast_k+\ast_k^{op}$ the cup brackets on $C^*(\Delta_1;V)$.
\begin{definition} Given an operad $(\Oh,\cdot_i,\bullet_j,[-,-]_k)$ as usual, the operad $\op{Lie}_\circ(\Oh)$ is generated by non-symmetric operations $\star_i$, anti-symmetric operations $\{-,-\}_j$ and symmetric operations $\circledast_k$ together with the larger set of relations making $C^*(\Delta_1;V)$ with the cup brackets a local dg $\op{Lie}_\circ(\Oh)$-algebra for every $\Oh$-algebra $V$.\end{definition}
\begin{theorem}\label{th:liewhite} There is a natural isomorphism $\op{Lie}_\circ(-)\xrightarrow{\cong}\sL ie\circ-$ of functors $\mathbf{Op}\to\mathbf{Op}$.
\end{theorem}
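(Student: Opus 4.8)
The plan is to mirror the proofs of Theorems \ref{th:asswhite} and \ref{th:permwhite}. Recall that $\sL ie\circ\Oh$ is generated by the tensor product operations $[-,-]\otimes\cdot_i$ (non-symmetric), $[-,-]\otimes\bullet_j$ (anti-symmetric) and $[-,-]\otimes[-,-]_k$ (symmetric), together with all the relations holding in the tensor product $L\otimes V$ of a generic $\sL ie$-algebra $(L,[-,-])$ and a generic $\Oh$-algebra $V$. I match these with the generators of $\op{Lie}_\circ(\Oh)$ via $\star_i\leftrightarrow[-,-]\otimes\cdot_i$, $\{-,-\}_j\leftrightarrow[-,-]\otimes\bullet_j$ and $\circledast_k\leftrightarrow[-,-]\otimes[-,-]_k$. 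Since $(C^*(\Delta_1;\mathbb{K}),[-,-]=\cup-\cup^{op})$ is a $\sL ie$-algebra and $C^*(\Delta_1;V)=C^*(\Delta_1;\mathbb{K})\otimes V$, the cup brackets are precisely the tensor product operations defining the $\sL ie\circ\Oh$-algebra structure on this tensor product; they automatically satisfy the Leibniz rule and the locality assumption, so every relation of $\sL ie\circ\Oh$ holds among them. This produces a morphism of operads $\sL ie\circ\Oh\to\op{Lie}_\circ(\Oh)$, and it remains to prove that it is an isomorphism, i.e. that conversely every generating relation of $\op{Lie}_\circ(\Oh)$ already holds in $L\otimes V$ for every $\sL ie$-algebra $L$.

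To identify the generating relations I would first prove the analogue of Lemma \ref{lem:relwhite}. Writing a ternary cup-bracket expression as $R=\sum_{\sigma\in S_3}R_\sigma$ and applying Lemma \ref{lem:rel}, the vanishing of $R$ on $C^*(\Delta_1;V)$ reduces to the six evaluations of that lemma. The governing computation is that the only non-vanishing triple brackets of the basis cochains $_1\to,\to_1,\xrightarrow{1}\in C^*(\Delta_1;\mathbb{K})$ are those in which the degree one cochain $\xrightarrow{1}$ occupies one of the two inner slots of a left-normed bracket, for instance $[[_1\to,\xrightarrow{1}],\to_1]=\xrightarrow{1}=-[[\xrightarrow{1},\,_1\to],\to_1]$ and their analogues, whereas $[[_1\to,\to_1],\xrightarrow{1}]=0$. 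As in Lemma \ref{lem:relwhite} this exhibits a generating set of relations of $\op{Lie}_\circ(\Oh)$ as suitable splittings, via the cup brackets, of the relations $R'(x,y,z)=0$ of $\Oh$, read off from the evaluations $R(\dots)=\xrightarrow{R'(x,y,z)}=0$.

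For the converse it then suffices to check that each such splitting holds in $L\otimes V$. Evaluating a cup-bracket expression on decomposable tensors $a\otimes x,b\otimes y,c\otimes z\in L\otimes V$ factors it, since each generator is of the form $[-,-]\otimes(\text{$\Oh$-operation})$, as a sum $\sum_m m(a,b,c)\otimes P_m(x,y,z)$ of $\sL ie$-monomials $m$ in $a,b,c$ tensored with $\Oh$-operations $P_m$ in $x,y,z$. Reducing the $\sL ie$ side to the two-dimensional basis $[[a,b],c],[[a,c],b]$ of multilinear $\sL ie$-monomials via the Jacobi identity, the expression vanishes in $L\otimes V$ for generic $L$ if and only if the two associated $\Oh$-operations vanish as relations of $\Oh$; and the splittings are arranged precisely so that these two operations are combinations of the relations $R'=0$ of $\Oh$.

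The main obstacle is that $C^*(\Delta_1;\mathbb{K})$ is \emph{not} free as a $\sL ie$-algebra: it satisfies extra identities, such as $[[_1\to,\to_1],\xrightarrow{1}]=0$, which fail in a generic $L$, so a priori a relation could hold on $C^*(\Delta_1;\mathbb{K})$ without holding on all $L\otimes V$. The resolution, exactly parallel to the associative case where $C^*(\Delta_1;\mathbb{K})$ is likewise non-free yet detects precisely associativity, is that an operadic relation must vanish under \emph{all} assignments of the three inputs to the basis cochains, and the six such assignments jointly recover the full two-dimensional space of multilinear $\sL ie$-monomials: concretely, evaluating $[[a,b],c]$ and $[[a,c],b]$ along the six permutations of $(_1\to,\to_1,\xrightarrow{1})$ gives a rank two system, so that vanishing on $C^*(\Delta_1;\mathbb{K})$ does force the vanishing of both $\Oh$-operations $P_m$. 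Hence no identity peculiar to $C^*(\Delta_1;\mathbb{K})$ is promoted to a relation of $\op{Lie}_\circ(\Oh)$, the generating relations of $\op{Lie}_\circ(\Oh)$ coincide with those of $\sL ie\circ\Oh$, and the morphism above is an isomorphism. Together with Theorems \ref{th:asswhite} and \ref{th:permwhite} this completes the Manin white product computations.
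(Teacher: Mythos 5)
Your proof is correct, but it takes a genuinely different route from the paper's. The paper disposes of this theorem in two lines by reduction to Theorem \ref{th:asswhite}: by construction $\op{Lie}_\circ(\Oh)$ is the suboperad of $\op{Ass}_\circ(\Oh)$ generated by $\star_i=\prec_i-\succ_i^{op}$, $\{-,-\}_j=\circ_j-\circ_j^{op}$, $\circledast_k=\ast_k+\ast_k^{op}$, while $\sL ie\circ\Oh$ is the suboperad of $\sA ss\circ\Oh$ generated by $(\cup-\cup^{op})\otimes\cdot_i$, $(\cup-\cup^{op})\otimes\bullet_j$, $(\cup-\cup^{op})\otimes[-,-]_k$; since the isomorphism of Theorem \ref{th:asswhite} matches these generating sets, it restricts to the desired isomorphism. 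You instead replay the proofs of Theorems \ref{th:asswhite} and \ref{th:permwhite} directly in the Lie setting, and your key ingredient --- which the paper never needs to make explicit --- checks out: a ternary expression $R$ in the cup brackets, evaluated on decomposables of $L\otimes V$, reduces by the Jacobi identity to $[[a,b],c]\otimes P_1(x,y,z)+[[a,c],b]\otimes P_2(x,y,z)$, and the six evaluations of Lemma \ref{lem:rel} compute exactly $R(\xrightarrow{x},\,_y\to,\to_z)=\xrightarrow{-(P_1+P_2)(x,y,z)}$, $R(_x\to,\xrightarrow{y},\to_z)=\xrightarrow{P_1(x,y,z)}$, $R(_x\to,\to_y,\xrightarrow{z})=\xrightarrow{P_2(x,y,z)}$ (and the same up to permuting the vertices), so their joint vanishing forces $P_1=P_2=0$ in $\Oh$. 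This rank-two computation is precisely what prevents identities peculiar to the non-free Lie algebra $C^*(\Delta_1;\mathbb{K})$, such as $[[_1\to,\to_1],\xrightarrow{1}]=0$, from being promoted to relations of $\op{Lie}_\circ(\Oh)$, which is the one genuine danger in this approach and which you correctly isolate and resolve. The trade-off: the paper's argument is shorter and recycles work already done; yours is self-contained (independent of Theorem \ref{th:asswhite}) and yields as a by-product the Lie analogue of the splitting Lemma \ref{lem:relwhite}, i.e.\ an explicit description of a generating set of relations of $\sL ie\circ\Oh$, which the paper does not record.
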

\begin{proof} This follows from Theorem \ref{th:asswhite}: in fact, by construction $\op{Lie}_\circ(\Oh)$ is the suboperad of $\op{Ass}_\circ(\Oh)$ generated by the operations $\star_i=\prec_i-\succ_i^{op}$, $\{-,-\}_j=\circ_j-\circ_j^{op}$, $\circledast_k=\ast_k+\ast_k^{op}$, and similarly $\sL ie\circ\Oh$ is the suboperad of $\sA ss\circ\Oh$ generated by the operations $(\cup-\cup^{op})\otimes\cdot_i$, $(\cup-\cup^{op})\otimes\bullet_j$, $(\cup-\cup^{op})\otimes[-,-]_k$.
\end{proof}

\begin{example}\label{ex:liewhite} The computations from the previous section, together with the above theorem and the fact that $\sL ie\circ\Oh=(\sC om\bullet\Oh^!)^!$, imply for instance $\op{Lie}_\circ(\sA ss)=\op{Lie}_\circ(pre\sL ie)=\sM ag_{1,0,0}$, $\op{Lie}_\circ(\sL ie)=\sM ag_{0,1,0}$, $\op{Lie}_\circ(\sP ois)=\sL ie+\sM ag_{0,1,0}$, $\op{Lie}_\circ(\sP erm)=\sL eib$, $\op{Lie}_\circ(\sZ inb)=pre\sL ie$. To illustrate the latter, we show that the (right) pre-Lie relation holds in the tensor product $L\otimes V$ of a generic Lie algebra $(L,[-,-])$ and a generic $\sZ inb$-algebra $(V,\cdot)$, equipped with the tensor product operation $\star:=[-,-]\otimes\cdot$. The associator of $\star$ is given by $A_\star(l\ten x,m\ten y,n\ten z)=[[l,m],n]\ten (x\cdot y)\cdot z\,-\,[l,[m,n]]\ten x\cdot(y\cdot z)$: to show that this is graded symmetric in the last two arguments, we compute
	\[ [[l,m],n]\ten (x\cdot y)\cdot z\,-\,[l,[m,n]]\ten x\cdot(y\cdot z)\,-\,[[l,n],m]\ten (x\cdot z)\cdot y\,+\,[l,[n,m]]\ten x\cdot(z\cdot y)  = \]
	\[ =[l,[m,n]]\ten\left( (x\cdot y)\cdot z-x\cdot(y\cdot z)-x\cdot(z\cdot y) \right)\,+\,[[l,n],m]\ten\left( (x\cdot y)\cdot z -(x\cdot z)\cdot y \right)\:=\:0. \] 

\end{example}

We are finally ready to complete the proof of Theorem \ref{th:black}: this will follow from the following theorem and theorems \ref{th:asswhite}, \ref{th:permwhite}, \ref{th:liewhite}.
\begin{theorem}\label{th:adj}  The following $\xymatrix{\op{Ass}_\bullet(-):\mathbf{Op}\ar@<2pt>[r]& \mathbf{Op}:\op{Ass}_\circ(-)\ar@<2pt>[l]}$, $\xymatrix{\op{Com}_\bullet(-):\mathbf{Op}\ar@<2pt>[r]& \mathbf{Op}:\op{Lie}_\circ(-)\ar@<2pt>[l]}$, $\xymatrix{\op{preLie}_\bullet(-):\mathbf{Op}\ar@<2pt>[r]& \mathbf{Op}:\op{Perm}_\circ(-)\ar@<2pt>[l]}$ are pairs of adjoint functors.
	\end{theorem}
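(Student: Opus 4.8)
The plan is to prove the three adjunctions by exhibiting, for each pair, a bijection of hom-sets
\[\Hom_{\mathbf{Op}}(\op{Ass}_\bullet(\mathcal P),\Oh)\;\cong\;\Hom_{\mathbf{Op}}(\mathcal P,\op{Ass}_\circ(\Oh))\]
natural in $\mathcal P$ and $\Oh$, and similarly with $(\op{Com}_\bullet,\op{Lie}_\circ)$ and $(\op{preLie}_\bullet,\op{Perm}_\circ)$ in place of $(\op{Ass}_\bullet,\op{Ass}_\circ)$; I would carry out the associative case and indicate the (identical) modifications for the other two. First I would spell out both hom-sets as spaces of generator data. Because the relations of $\op{Ass}_\bullet(\mathcal P)$ are ternary (Lemma \ref{lem:rel}), its arity-two part is free, so a morphism $\op{Ass}_\bullet(\mathcal P)\to\Oh$ is precisely a tuple of operations $\prec_i,\succ_i,\circ_j,\ast_k\in\Oh(2)$ satisfying the relations of Lemma \ref{lem:rel}. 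Dually, the relations of $\op{Ass}_\circ(\Oh)$ are ternary (Lemma \ref{lem:relwhite}), so its arity-two part is free as well, and by Theorem \ref{th:asswhite} one has $\op{Ass}_\circ(\Oh)(2)\cong\sA ss(2)\ten\Oh(2)$ with $\sA ss(2)=\langle\cup,\cup^{op}\rangle$; hence a morphism $\mathcal P\to\op{Ass}_\circ(\Oh)$ is precisely a tuple $(D_i,B_j,L_k)$ in $\sA ss(2)\ten\Oh(2)$, with $D_i$ arbitrary, $B_j$ symmetric and $L_k$ antisymmetric, subject to the relations of $\mathcal P$.

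The heart of the proof is a single compatibility between the constructions of Sections 1 and 2. Writing $e_0,e_1,e_{01}$ for the basis of $C^*(\Delta_1;\mathbb K)$ dual to the left vertex, right vertex and edge, whose cup products are $e_0\cup e_{01}=e_{01}=e_{01}\cup e_1$ and $e_1\cup e_{01}=e_{01}\cup e_0=e_{01}\cup e_{01}=0$, I would check that the linear bijection of generator data
\[D_i=\cup\ten\prec_i-\cup^{op}\ten\succ_i^{op},\qquad B_j=\cup\ten\circ_j+\cup^{op}\ten\circ_j^{op},\qquad L_k=\cup\ten\ast_k-\cup^{op}\ten\ast_k^{op}\]
(which respects the symmetry constraints, the symmetric resp.\ antisymmetric elements of $\sA ss(2)\ten\Oh(2)$ being exactly those of the form $\cup\ten\psi\pm\cup^{op}\ten\psi^{op}$) has the following property: letting $D_i,B_j,L_k$ act as cup products on $C^*(\Delta_1;V)$ reproduces exactly the local dg operations $\cdot_i',\bullet_j',[-,-]_k'$ that the formulas of Section 1 build out of $\prec_i,\succ_i,\circ_j,\ast_k$. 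This is a direct evaluation on the basis above; for instance the summand $\cup^{op}\ten\succ_i^{op}$ contributes $0$ to $e_0\cdot_i'e_{01}$ and the term $-\xrightarrow{y\succ_i x}$ to $e_1\cdot_i'e_{01}$, which is precisely the mixed value $\to_x\cdot_i'\xrightarrow{y}$ prescribed there, while $\cup\ten\prec_i$ yields $\xrightarrow{x\prec_i y}$ at $e_0\cdot_i'e_{01}$.

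Granting this identity the bijection is immediate, and for a structural rather than a computational reason. By the defining property of $\op{Ass}_\circ(\Oh)$ — its relations are exactly those satisfied by the cup products on $C^*(\Delta_1;V)$ for every $\Oh$-algebra $V$ — the relations of $\mathcal P$ hold among $D_i,B_j,L_k$ in $\op{Ass}_\circ(\Oh)$ if and only if they hold among $\cdot_i',\bullet_j',[-,-]_k'$ on every $C^*(\Delta_1;V)$, which by Lemma \ref{lem:rel} is exactly the condition that $\prec_i,\succ_i,\circ_j,\ast_k$ define a morphism $\op{Ass}_\bullet(\mathcal P)\to\Oh$. Thus, inside the common space of generator data, the two hom-sets are cut out by literally the same equations, and the correspondence above is the desired bijection. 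It is natural in $\mathcal P$ and $\Oh$ because it is given by fixed universal formulas and the cup-product structure is functorial in $\Oh$; as a consistency check, taking $\mathcal P=\op{Ass}_\circ(\Oh)$ and tracking the identity one recovers precisely the counit $\op{Ass}_\bullet(\op{Ass}_\circ(\Oh))\to\Oh$ described in the Introduction. For $(\op{Com}_\bullet,\op{Lie}_\circ)$ one repeats the argument with the cup brackets $[-,-]=\cup-\cup^{op}$, which force the associated $\mathcal P$-structure on the degree-zero part to vanish, matching the definition of $\op{Com}_\bullet$; for $(\op{preLie}_\bullet,\op{Perm}_\circ)$ one works on the relative complex $C^*(\Delta_1,v_l;V)$ with the permutative cup product, invoking Lemmas \ref{lem:relpl} and \ref{lem:relpm} in place of Lemmas \ref{lem:rel} and \ref{lem:relwhite}.

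I expect the genuine obstacle to be the compatibility identity of the second paragraph: one must verify, with the signs and the $\succ$-versus-$\succ^{op}$ conventions of Remark \ref{rem:curlyeqprec} under control, that the cup-product action of each $D_i,B_j,L_k$ agrees with the corresponding Section 1 operation on all the mixed products, and not merely up to a reshuffling of terms. Once the agreement of the generating operations is in hand nothing more is needed, since the equality of the two hom-sets then follows formally; the only remaining point requiring care is that the defining property of $\op{Ass}_\circ(\Oh)$ is an equality of relation ideals rather than of spanning sets, which is exactly what licenses the passage between \vr holds in $\op{Ass}_\circ(\Oh)$'' and \vr holds on every $C^*(\Delta_1;V)$''.
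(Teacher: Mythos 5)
Your proposal is correct, and its architecture differs genuinely from the paper's. The paper proves adjointness by exhibiting the unit $\varepsilon_\Oh:\Oh\to\op{Ass}_\circ(\op{Ass}_\bullet(\Oh))$ and counit $\mu_\Oh:\op{Ass}_\bullet(\op{Ass}_\circ(\Oh))\to\Oh$ explicitly on generators, checking that both are well-defined morphisms of operads, and then verifying the two triangle identities by hand, which is what forces the doubly-decorated bookkeeping $\underline{\cup\ten\cdot_i}$, $\cup\ten\underline{\cdot_i}$, and so on; you instead produce the hom-set bijection directly, describing both $\Hom_{\mathbf{Op}}(\op{Ass}_\bullet(\sP),\Oh)$ and $\Hom_{\mathbf{Op}}(\sP,\op{Ass}_\circ(\Oh))$ as loci in spaces of generator data and showing, via your compatibility identity together with Lemma \ref{lem:rel} and the defining property of $\op{Ass}_\circ(\Oh)$, that both loci are cut out by the same condition (the induced operations on $C^*(\Delta_1;V)$ satisfy the relations of $\sP$ for every $\Oh$-algebra $V$), so that no triangle identities need to be checked. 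The computational core is nonetheless shared: your $D_i=\cup\ten\prec_i-\cup^{op}\ten\succ_i^{op}$ is literally the paper's unit formula $\cup\ten\underline{\cdot_i}+\cup^{op}\ten\overline{\cdot_i}$ (since $\overline{\cdot_i}=\curlyeqsucc_i=-\succ_i^{op}$ in the conventions of Remark \ref{rem:curlyeqprec}), and your compatibility identity is exactly the paper's verification that $\alpha(\cup\ten\underline{\cdot_i}+\cup^{op}\ten\overline{\cdot_i})\beta=\alpha\cdot_i\beta$ on $C^*(\Delta_1;V)$. Your route buys brevity and conceptual transparency, since once the two hom-sets are ``the same equations'' the adjunction is tautological; the paper's route buys the explicit unit and counit, which are of independent use (the notions of $\sL ie$-type and $\sC om$-type operads in the material on Chapton's square are defined by asking $\varepsilon_\Oh$, respectively $\mu_\Oh$, to be an isomorphism, and the proposition proved there reuses the triangle identities). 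Two steps you leave implicit should be recorded in a complete write-up: the passage from ``the relations hold on $C^*(\Delta_1;V)$ for every $\Oh$-algebra $V$'' to ``the images satisfy the relations of $\op{Ass}_\bullet(\sP)$ in $\Oh$'' needs the standard fact that a ternary operation of $\Oh$ vanishes iff it vanishes on a generic $\Oh$-algebra (the same genericity invoked in the proof of Lemma \ref{lem:relwhite}); and naturality of your bijection, which you assert from ``fixed universal formulas'', requires fixing how $\op{Ass}_\bullet$ and $\op{Ass}_\circ$ act on morphisms, i.e.\ the formulas $\op{Ass}_\bullet(f)(\underline{\cdot_i})=\underline{f(\cdot_i)}$ and $\op{Ass}_\circ(f)(\cup\ten\cdot_i)=\cup\ten f(\cdot_i)$ that the paper records before its triangle-identity computations --- with those in hand the naturality squares commute by inspection, and the same remarks apply verbatim to the $(\op{Com}_\bullet,\op{Lie}_\circ)$ and $(\op{preLie}_\bullet,\op{Perm}_\circ)$ cases.
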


\begin{proof} We consider the case of $\xymatrix{\op{Ass}_\bullet(-):\mathbf{Op}\ar@<2pt>[r]& \mathbf{Op}:\op{Ass}_\circ(-)\ar@<2pt>[l]}$ in detail.
	
So far we used overlapping notations for the generating sets of operations of $\op{Ass}_\bullet(\Oh)$ and $\op{Ass}_\circ(\Oh)$: this isn't practical to prove adjointness, thus, only for this proof, we will have to change notations; moreover, we will use slightly different generating sets than the ones we used before. We consider first the case of a non-symmetric operation $\cdot_i$ of $\Oh$. Corresponding to $\cdot_i$ there are two generating operations of $\op{Ass}_\bullet(\Oh)$ which we denote by $\underline{\cdot_i}$ and $\overline{\cdot_i}$ respectively: we use the generating set of operations from Remark \ref{rem:curlyeqprec}, explicitly, $\underline{\cdot_i}$ and $\overline{\cdot_i}$ are defined by the formulas $_x\to\cdot_i\xrightarrow{y}\:=\:\xrightarrow{x\underline{\cdot_i} y}\:=\:\xrightarrow{x}\cdot_i\to_y$ and $\to_x\cdot_i\xrightarrow{y}\:=\:\xrightarrow{x\overline{\cdot_i}y}\:=\:\xrightarrow{x}\cdot_i\,_y\to$ (with the previous notations from remark \ref{rem:curlyeqprec} we have $\underline{\cdot_i}=\prec_i$ and $\overline{\cdot_i}=\curlyeqsucc_i=-\succ_i^{op}$). Likewise, corresponding to $\cdot_i$ there are two generating operations of $\op{Ass}_\circ(\Oh)$ which we denote by $\cup\otimes\cdot_i$ and $\cup^{op}\otimes\cdot_i$ respectively: explicitly, these are defined by the formulas $_x\to(\cup\otimes\cdot_i)\xrightarrow{y}=\xrightarrow{x\cdot_i y}=\xrightarrow{x}(\cup\otimes\cdot_i)\to_y$ and $\to_x(\cup^{op}\otimes\cdot_i)\xrightarrow{y}=\xrightarrow{x\cdot_i y}=\xrightarrow{x}(\cup^{op}\otimes\cdot_i)\,_y\to$ (with the previous notations from this section we have $\cup\otimes\cdot_i=\prec_i$ and $\cup^{op}\otimes\cdot_i=\succ_i^{op}$). Finally, corresponding to $\cdot_i$ there are four generating operations of the operad $\op{Ass}_\circ(\op{Ass}_\bullet(\Oh))$, which we denote by $\cup\otimes\underline{\cdot_i}$, $\cup^{op}\otimes\underline{\cdot_i}$, $\cup\otimes\overline{\cdot_i}$ and $\cup^{op}\otimes\overline{\cdot_i}$ respectively, and four generating operations of $\op{Ass}_\bullet(\op{Ass}_\circ(\Oh))$, which we denote by $\underline{\cup\otimes\cdot_i}$, $\overline{\cup\otimes\cdot_i}$, $\underline{\cup^{op}\otimes\cdot_i}$ and $\overline{\cup^{op}\otimes\cdot_i}$ respectively. Similarly, to a generating symmetric operation $\bullet_j$ of $\Oh$ correspond a generating (non-symmetric) operation $\underline{\bullet_j}$ of $\op{Ass}_\bullet(\Oh)$ and a generating (non-symmetric) operation $\cup\otimes\bullet_j$ of $\op{Ass}_\circ(\Oh)$, as well as two generating (non-symmetric) operations $\cup\otimes\underline{\bullet_j}$, $\cup^{op}\otimes\underline{\bullet_j}$ of $\op{Ass}_\circ(\op{Ass}_\bullet(\Oh))$ and two generating (non-symmetric) operations $\underline{\cup\otimes\bullet_j}$, $\overline{\cup\otimes\bullet_j}$ of $\op{Ass}_\bullet(\op{Ass}_\circ(\Oh))$. Finally, to a generating antisymmetric operation $[-,-]_k$ of $\Oh$ correspond generating (non-symmetric) operations $\underline{[-,-]_k}$ and $\cup\otimes[-,-]_k$ of $\op{Ass}_\bullet(\Oh)$ and $\op{Ass}_\circ(\Oh)$ respectively, as well as generating operations $\cup\otimes\underline{[-,-]_k}$, $\cup^{op}\otimes\underline{[-,-]_k}$ of $\op{Ass}_\circ(\op{Ass}_\bullet(\Oh))$ and $\underline{\cup\otimes[-,-]_k}$, $\overline{\cup\otimes[-,-]_k}$ of $\op{Ass}_\bullet(\op{Ass}_\circ(\Oh))$ respectively.
	
Having established the previous notations, we will prove the theorem by explicitly exihibiting the unit $\varepsilon_\Oh:\Oh\to\op{Ass}_\circ(\op{Ass}_\bullet(\Oh))$ and the counit $\mu_{\Oh}:\op{Ass}_\bullet(\op{Ass}_\circ(\Oh))\to\Oh$ of the adjunction. The construction of the counit is implicit in the very definition of the functors $\op{Ass}_\circ(-)$ and $\op{Ass}_\bullet(-)$: given an $\Oh$-algebra $V$, the tensor product operations induce a local dg $\op{Ass}_\circ(\Oh)$-algebra structure on the complex of $V$-valued cochains $C^*(\Delta_1;V)=C^*(\Delta_1;\mathbb{K})\otimes V$, but by definition this is the same as an $\op{Ass}_\bullet(\op{Ass}_\circ(\Oh))$-algebra structure on the space $V$. Unraveling the definitions, we find that the counit $\mu_{\Oh}:\op{Ass}_\bullet(\op{Ass}_\circ(\Oh))\to\Oh$ is explicitly given by 
\[ \underline{\cup\otimes\cdot_i},\:\overline{\cup^{op}\otimes\cdot_i}\to\cdot_i,\qquad\overline{\cup\otimes\cdot_i},\: \underline{\cup^{op}\otimes\cdot_i}\to0, \] 
\[ \underline{\cup\otimes\bullet_j}\to\bullet_j,\qquad\overline{\cup\otimes\bullet_j}\to0,\qquad\underline{\cup\otimes[-,-]_k}\to[-,-]_k,\qquad\overline{\cup\otimes[-,-]_k}\to0. \]
As already remarked, this defines a morphism of operads $\mu_{\Oh}:\op{Ass}_\bullet(\op{Ass}_\circ(\Oh))\to\Oh$ by construction of the functors $\op{Ass}_\circ(-)$ and $\op{Ass}_\bullet(-)$. It remains to define the unit $\varepsilon_\Oh:\Oh\to\op{Ass}_\circ(\op{Ass}_\bullet(\Oh))$, this is explicitly given by
\[\cdot_i\to \cup\otimes\underline{\cdot_i}+\cup^{op}\otimes\overline{\cdot_i}, \qquad\bullet_j\to\cup\otimes\underline{\bullet_j}+(\cup\otimes\underline{\bullet_j})^{op},\qquad[-,-]_k\to\cup\otimes\underline{[-,-]_k}-(\cup\otimes\underline{[-,-]_k})^{op}. \]
We have to show that this is a morphism of operads. Given a generic $\op{Ass}_\bullet(\Oh)$-algebra $(V,\underline{\cdot_i},\overline{\cdot_i},\underline{\bullet_j},\underline{[-,-]_k})$, the complex $C^*(\Delta_1;V)$ carries both an $\op{Ass}_\circ(\op{Ass}_\bullet(\Oh))$-algebra structure via the tensor product operations and an $\Oh$-algebra structure $(C^*(\Delta_1;V),\cdot_i,\bullet_j,[-,-]_k)$ by definition of $\op{Ass}_\bullet(-)$. An easy verification shows $\alpha(\cup\otimes\underline{\cdot_i}+\cup^{op}\otimes\overline{\cdot_i})\beta=\alpha\cdot_i\beta$ for all $\alpha,\beta\in C^*(\Delta_1;V)$. For instance,
\[ \to_x (\cup\otimes\underline{\cdot_i}+\cup^{op}\otimes\overline{\cdot_i}) \xrightarrow{y}=\to_x(\cup^{op}\otimes\overline{\cdot_i})\xrightarrow{y}=\xrightarrow{x\overline{\cdot_i} y}=\to_x\cdot_i\xrightarrow{y}.\]
Similarly, one verifies $\alpha(\cup\otimes\underline{\bullet_j}+(\cup\otimes\underline{\bullet_j})^{op})\beta=\alpha\bullet_j\beta$ and $\alpha(\cup\otimes\underline{[-,-]_k}-(\cup\otimes\underline{[-,-]_k})^{op})\beta=[\alpha,\beta]_k$ for all $\alpha,\beta\in C^*(\Delta_1;V)$. Finally, given a relation $R(-,-,-)=0$ of $\Oh$, this induces a relation $R'(-,-,-)=0$ of $\op{Ass}_\bullet(\Oh)$ as in Lemma \ref{lem:rel}
\[  R(_x\to,\xrightarrow{y},\to_z)=\xrightarrow{R'(x,y,z)}=0.\]
On the other hand, by the previous considerations $R(-,-,-)$ in the left hand side can be computed equivalently either in the products $\cdot_i,\bullet_j,[-,-]_k$ of the $\Oh$-algebra structure or in the products $\cup\otimes\underline{\cdot_i}+\cup^{op}\otimes\overline{\cdot_i}, \:\:\cup\otimes\underline{\bullet_j}+(\cup\otimes\underline{\bullet_j})^{op},\:\:\cup\otimes\underline{[-,-]_k}-(\cup\otimes\underline{[-,-]_k})^{op}$ of the $\op{Ass}_\circ(\op{Ass}_\bullet(\Oh))$-algebra structure, which shows that $\varepsilon_\Oh$ sends an $\Oh$-algebra relation to an $\op{Ass}_\circ(\op{Ass}_\bullet(\Oh))$-algebra relation, and is thus a well defined morphism of operads.

To complete the proof, it remains to show that $\varepsilon_{-}$ and $\mu_{-}$ satisfy the conditions to be the unit and the counit of an adjunction. More precisely, we have to show that the compositions
\[ \op{Ass}_\bullet(\Oh)\xrightarrow{\op{Ass}_\bullet(\varepsilon_\Oh)}\op{Ass}_\bullet(\op{Ass}_\circ(\op{Ass}_\bullet(\Oh)))\xrightarrow{\mu_{\op{Ass}_\bullet(\Oh)}}\op{Ass}_\bullet(\Oh),  \]
\[ \op{Ass}_\circ(\Oh)\xrightarrow{\varepsilon_{\op{Ass}_\circ(\Oh)}}\op{Ass}_\circ(\op{Ass}_\bullet(\op{Ass}_\circ(\Oh)))\xrightarrow{\op{Ass}_\circ(\mu_\Oh)}\op{Ass}_\circ(\Oh),\]
are the respective identities. We consider the first one: notice that given a morphism $f:\Oh\to\sP$ of operads, the morhism $\op{Ass}_\bullet(f)$ is defined by $\op{Ass}_\bullet(f)(\underline{\cdot_i})=\underline{f(\cdot_i)}$, $\op{Ass}_\bullet(f)(\overline{\cdot_i})=\overline{f(\cdot_i)}$, $\op{Ass}_\bullet(f)(\underline{\bullet_j})=\underline{f(\bullet_j)}$ and $\op{Ass}_\bullet(f)(\underline{[-,-]_k})=\underline{f([-,-]_k)}$. Now it is easy to compute the first composition using the previous formulas (for a non-symmetric operation $\#$ we notice that $\underline{(\#^{op})}=(\overline{\#})^{op}$, in the following computation we apply this for $\#=\cup\otimes\underline{\bullet_j}$ and $\#=\cup\otimes\underline{[-,-]_k}$)
\[ \underline{\cdot_i}\to\underline{\cup\otimes\underline{\cdot_i}}+\underline{\cup^{op}\otimes\overline{\cdot_i}}\to\underline{\cdot_i}+0,\qquad\overline{\cdot_i}\to\overline{\cup\otimes\underline{\cdot_i}}+\overline{\cup^{op}\otimes\overline{\cdot_i}}\to0+\overline{\cdot_i},\]\[ \underline{\bullet_j}\to\underline{\cup\otimes\underline{\bullet_j}+(\cup\otimes\underline{\bullet_j})^{op}}=\underline{\cup\otimes\underline{\bullet_j}}+(\overline{\cup\otimes\underline{\bullet_j}})^{op}\to\underline{\bullet_j}+0,  \]\[ \underline{[-,-]_k}\to\underline{\cup\otimes\underline{[-,-]_k}-(\cup\otimes\underline{[-,-]_k})^{op}}=\underline{\cup\otimes\underline{[-,-]_k}}-(\overline{\cup\otimes\underline{[-,-]_k}})^{op}\to\underline{[-,-]_k}-0.  \]
Next we consider the second composition. We have $\op{Ass}_\circ(f)(\cup\otimes\cdot_i)=\cup\otimes f(\cdot_i)$, $\op{Ass}_\circ(f)(\cup^{op}\otimes \cdot_i)=\cup^{op}\otimes f(\cdot_i)$, and similarly for the other cases. As desired, the second composition is
\[\cup\otimes\cdot_i\to\cup\otimes\underline{\cup\otimes\cdot_i} +\cup^{op}\otimes\overline{\cup\otimes\cdot_i}\to\cup\otimes\cdot_i+0,\]\[ \cup^{op}\otimes\cdot_i\to\cup\otimes\underline{\cup^{op}\otimes\cdot_i} +\cup^{op}\otimes\overline{\cup^{op}\otimes\cdot_i}\to0+\cup^{op}\otimes\cdot_i,      \]
\[\cup\otimes\bullet_j\to\cup\otimes\underline{\cup\otimes\bullet_j} +\cup^{op}\otimes\overline{\cup\otimes\bullet_j}\to\cup\otimes\bullet_j+0,\]
\[ \cup\otimes[-,-]_k\to\cup\otimes\underline{\cup\otimes[-,-]_k} +\cup^{op}\otimes\overline{\cup\otimes[-,-]_k}\to\cup\otimes[-,-]_k+0.\] 

The remaining cases of $\xymatrix{\op{Com}_\bullet(-):\mathbf{Op}\ar@<2pt>[r]& \mathbf{Op}:\op{Lie}_\circ(-)\ar@<2pt>[l]}$ and  $\xymatrix{\op{preLie}_\bullet(-):\mathbf{Op}\ar@<2pt>[r]& \mathbf{Op}:\op{Perm}_\circ(-)\ar@<2pt>[l]}$ are proved by the same argument: in the former, we notice that we may identify $\op{Com}_\bullet(\op{Lie}_\circ(\Oh))$, $\Oh$ and $\op{Lie}_\circ(\op{Com}_\bullet(\Oh))$ with quotients of the same free operad $(\sM ag_{p,q,r},\cdot_i,\bullet_j,[-.-]_k)$ in such a way that the unit and the counit are the identities on the generating operations.\end{proof}


\begin{thebibliography}{99}
\bibitem{A} M. Aguiar, \emph{Pre-Poisson algebras},  Lett. Math. Phys. \textbf{54} (2000), no. 4, 263-277.
\bibitem{bai} C. Bai, O. Bellier, L. Guo, X. Ni, \emph{Splitting of operations, Manin products and Rota-Baxter operators},  Int. Math. Res. Notices (2013), Vol. 2013, 485-524; \texttt{arXiv:1306.3046}.
\bibitem{bai2} C. Bai, L. Liu, X, Ni, \emph{Some results on L-dendriform algebras}, J. Geom. Phys. \textbf{60} (2010), pp. 940-950; \texttt{arXiv:1104.0281}.
\bibitem{bai3} C. Bai, L. Guo, J. Pei, \emph{Disuccessors and duplicators of operads, Manin products and operators}, in \emph{Symmetries and Groups in Contemporary Physics: Proceedings of the XXIX International Colloquium on Group-Theoretical Methods in Physics. Tianjin China, 20-26 August 2012}, World Scientific, 2013, pp.191-196.
\bibitem{cha} F. Chapton, \emph{Un endofoncteur de la cat\'egorie des op\'erades}, in \emph{Dialgebras and related operads}, Lecture Notes in Math. \textbf{1763}, Springer, Berlin, 2001, pp. 105-110.
\bibitem{GK} V. Yu. Gubarev, P. S. Kolesnikov, \emph{On the computation of Manin products for operads}; \texttt{arXiv:1204.0894}.
\bibitem{LR} J.-L. Loday, M. O. Ronco, \emph{Trialgebras and families of polytopes}, in \emph{Homotopy theory:
relations with algebraic geometry, group cohomology, and algebraic K-theory},
Contemp. Math. \textbf{346}, Amer. Math. Soc., Providence, RI, 2004, pp. 369–398; \texttt{arXiv:math/0205043}.
\bibitem{L} J.-L. Loday, \emph{Some problems in operad theory}; in \emph{Operads and Universal Algebra: Proceedings of the International Conference on Operads and Universal Algebra, Tianjin, China, 5-9 July 2010}, World Scientific, 2012, pp. 139-146; \texttt{arxiv:1109.3290}.
\bibitem{U} K. Uchino, \emph{Derived brackets construction and Manin products}; Lett. Math.
Phys. \textbf{93} (2010), no. 1, 37-53; \texttt{arXiv: 0904.1961v7}..
\bibitem{Vpos} B. Vallette, \emph{Homology of generalized partition posets}, J. Pure Appl. Algebra \textbf{208} (2007), no. 2, 699-725; \texttt{ arXiv:math/0405312v3}.
\bibitem{V} B. Vallette, \emph{Manin products, Koszul duality, Loday algebras and the Deligne conjecture}, J. Reine Angew. Math. \textbf{620} (2008), 105-164; \texttt{arXiv:math/0609002v2}.
\bibitem{LV} J.-L. Loday, B. Vallette, \emph{Algebraic operads}, Grundlehren der Mathematischen
Wissenschaften \textbf{346}, Springer, Heidelberg, 2012.
\bibitem{zinb} G. W. Zinbiel, \emph{Encyclopedia of types of algebras 2010}; \texttt{arXiv:1101.0267}.
\end{thebibliography}
\end{document}